\documentclass[a4paper, reqno]{amsart}
\usepackage[T1]{fontenc}
\usepackage{color}
\usepackage{amsxtra}
\usepackage{amsfonts}
\usepackage{amssymb}
\usepackage{latexsym}
\usepackage{amsmath}
\usepackage{amsthm}
\usepackage{mathtools}
\usepackage[mathscr]{eucal}
\usepackage{graphicx}
\usepackage[utf8]{inputenc}
\usepackage{subcaption}

\usepackage[ruled]{algorithm2e}
\usepackage{hyperref}

\usepackage[capitalise]{cleveref}

\usepackage[backend=biber]{biblatex}
\renewbibmacro{in:}{%
  \ifentrytype{inproceedings}
    {\printtext{\bibstring{in}\intitlepunct}}
    {}}

\newcommand{\N}{\mathbb{N}}
\newcommand{\R}{\mathbb{R}}

\newcommand{\Z}{\mathbb{Z}}
\newcommand{\Q}{\mathbb{Q}}

\newcommand{\COP}{{\mathcal{COP}}}
\newcommand{\CP}{{\mathcal{CP}}}
\newcommand{\Sym}{{\mathcal{S}}}

\newcommand{\bd}{\mathrm{bd}\,}
\newcommand{\interior}{\mathrm{int}\,}
\newcommand{\boundary}{\mathrm{bd}\,}
\newcommand{\relint}{\mathrm{relint}\,}
\newcommand{\cone}{\mathrm{cone}\,}
\newcommand{\conv}{\mathrm{conv}\,}

\newcommand{\NonN}{\mathcal{N}}
\newcommand{\Ryshkov}{\mathcal{R}}
\newcommand{\glnz}{{\rm GL}_n(\Z)}
\newcommand{\Trace}{{\rm Trace}}
\newcommand{\GL}{\mathrm{GL}}
\newcommand{\V}{\mathcal{V}}

\newcommand{\MinCOP}[1][\COP]{\mathrm{Min}_{#1} \thinspace}
\newcommand{\minCOP}[1][\COP]{\mathrm{min}_{#1} \thinspace}
\newcommand{\RyshkovK}[1][K]{\Ryshkov_{#1}}

\newcommand{\COPK}[1][K]{\COP_{#1}}
\newcommand{\CPK}[1][K]{\CP_{#1}}

\newcommand{\PK}{\mathcal{P}_K}

\newcommand{\extendedVoronoi}{e\mathcal{V}}

\newcommand{\vect}[2]{\big( \mkern-3mu \begin{smallmatrix} {#1} \\ {#2} \end{smallmatrix} \mkern-3mu \big)}

\theoremstyle{plain}
\newtheorem{thm}{Theorem}[section]
\newtheorem{lemma}[thm]{Lemma}
\newtheorem{coro}[thm]{Corollary}

\theoremstyle{definition}
\newtheorem{defi}[thm]{Definition}
\newtheorem{example}[thm]{Example}

\theoremstyle{remark}
\newtheorem{remark}[thm]{Remark}

\author{Alexander Oertel}
\address{A.~Oertel, Universit\"at Rostock, Institute of
  Mathematics, 18051 Rostock, Germany}
\email{alexander.oertel@uni-rostock.de}

\author{Achill Sch\"urmann}
\address{A.~Sch\"urmann, Universit\"at Rostock, Institute of
  Mathematics, 18051 Rostock, Germany}
\email{achill.schuermann@uni-rostock.de}

\date{\today}

\subjclass{11H55, 52B70, 11D09, 11J04, 90C20}
\keywords{Generalized Perfectness, Polyhedral Tesselation, Copositivity, CP-Factorization, Diophantine Approximation, Pell Equation}

\begin{document}

\title[Generalized Perfect Matrices]{Generalized Perfect Matrices}

\begin{abstract}
We generalize Voronoi's theory of perfect quadratic forms to generalized copositive matrices over a full-dimensional closed convex cone~$K$, by introducing a $K$-copositive minimum and perfect $K$-copositive matrices.
We consider a key feature of a given cone, which we call Interior Ryshkov (IR) property.
Under this property the classical theory and its applications generalize nicely and we prove that rationally generated cones possess this IR property.
For contrast, we give a detailed example of a simple cone without the IR property, showing various differences to the classical case.
Moreover, this example yields connections to questions of number theory, in particular to Diophantine approximation and the Pell Equation.
Finally, as an application, we give inner and outer polyhedral approximations for the generalized completely positive cone and a method to find rational certificates for (non-)membership in this cone.
\end{abstract}

\maketitle

\section{Introduction and Overview}

Perfect matrices, respectively perfect quadratic forms, are a classical
topic in number theory, dating back to the 19th century (cf. \cite{kz-1877}). 
Through the reduction theory for positive definite quadratic forms due
to Voronoi~\cite{voronoi-1907}, they became important for different
applications in several mathematical areas, ranging from topics in algebraic
geometry and topology (cf. \cite{MR2208417}, \cite{MR3084439}) 
to numerical algorithms for PDEs (cf. \cite{MR4029820})
and the famous lattice sphere packing problem (cf. \cite{cs-1998}).
An important ingredient in these applications are polyhedral subdivisions of the convex cone of
positive definite real symmetric matrices $\Sym^n_{\succ 0}$.
One of them is given by a locally finite polyhedral set known as the Ryshkov polyhedron,
whose vertices are the perfect matrices (see \cite{CompGeometryForms}).
Dually, another tessellation of $\Sym^n_{\succ 0}$ is obtained from
full-dimensional polyhedral Voronoi cones, each one given by the
minimal vectors of a perfect matrix.
Enumerating all perfect matrices, or Voronoi cones respectively, up to arithmetical equivalence
-- also known as Voronoi's algorithm -- can be thought of as
a graph traversal search among the vertices of the Ryshkov polyhedron.
A direct application of this enumeration is a solution of
the $n$-dimensional lattice sphere packing problem (see \cite{martinet-2003} and also the recent computational breakthrough for $n = 9$ in \cite{dw-2025}).

Over the years Voronoi's reduction theory and the notion of perfect forms or matrices has been
generalized in different directions. Important milestones are the works of
Koecher \cite{koecher-1960,koecher-1961} and Opgenorth \cite{Opgenorth01012001}.
Koecher generalized the corresponding polyhedral tessellations from $\Sym^n_{\succ 0}$ to
tessellations of any self-dual cone.
Opgenorth generalized it
further to the setting of a pair of cones dual to each other.
In the latter theory the polyhedral tessellation into Voronoi cones is constructed in
one convex cone, and the perfect matrices are taken from its dual cone.
As an explicit construction of the abstract theory,
the notion of {\em perfect copositive matrices} in the cone $\COP^n$ of copositive $n\times n$ matrices has been introduced
in~\cite{CertificateAlgo}.
There are several similarities, but also some differences to the
classical theory (see~\cite{PerfectCopositive}).
However, the polyhedral tessellations are quite similar:
Perfect copositive matrices are vertices of a locally finite polyhedral Ryshkov
set in the interior $\interior \COP^n$ and dually to it we obtain a
polyhedral tessellation into Voronoi cones for the cone $\CP^n$ of completely positive matrices.
This tessellation is used in~\cite{CertificateAlgo} to practically obtain certificates
(cp-factorizations) for complete positivity of given matrices.
In fact, a rational cp-factorization can be
obtained algorithmically in the described way whenever it exists.
Calculating these factorizations is a difficult and important problem (cf. \cite{gd-2020, BermanDuerShakedMondererOpenQuestions}).

In this paper we provide a common framework for the classical
theory of perfect matrices and the recently introduced theory of
perfect copositive matrices.
We are in particular interested under which conditions we obtain polyhedral
tessellations with similar properties and if we can use them to obtain
rational certificates for a given matrix to be completely positive in a generalized way.
For this, let us consider a closed convex cone $K\subseteq \R^n$ and the
notion of a {\em generalized completely positive matrix over~$K$},
that is, real symmetric matrices in 
$$\CPK  :=  \cone\{xx^{\sf T} : x \in K\}.$$
Specializing to $K=\R^n$ this gives the classical theory with $\CPK = \Sym^n_{\succcurlyeq 0}$
and for $K=\R^n_{\geq 0}$ we obtain the recently introduced copositive
theory with $\CPK = \CP^n$.
The dual cone of $\CPK$ is $\COPK$,
the cone of {\em generalized copositive matrices over~$K$}.
In it we define {\em perfect $K$-copositive matrices} and a 
generalized Ryshkov set $\RyshkovK$.
We then obtain a tessellation of $ \interior \CPK$ into Voronoi cones associated to the extreme points of $\RyshkovK$.
Note that this set $\RyshkovK$ is in general more complicated than its classical counterparts.
For example, we do not know in general if all its extreme points are also vertices.
See also Remark~\ref{remark:VerticesExtremePoints} on this matter.
Accordingly, we speak of vertices of $\RyshkovK$ only if we know there are only vertices, and of extreme points otherwise.
For more details about $K$-copositive matrices and the tessalation of $\interior \CPK$ we refer to Sections~\ref{sec:GeneralFramework} and~\ref{sec:ApproxToCPK}.

In many ways we have a common behavior for all~$K$:
In the interior of $\COPK$ the vertices of the Ryshkov set $\RyshkovK$
correspond to perfect $K$-copositive matrices (see also Lemma~\ref{lemma:extreme_points_are_vertices}).
In fact, it turns out that the classical and copositive theory generalize
nicely, whenever the Ryshkov set $\RyshkovK$ is in the interior
$\interior \COPK$. We say that $K$ has the
{\em Interior Ryshkov property (IR property)} or that $K$ is \emph{IR} then (see Definition~\ref{def:IR}).
This is, for instance, the case for
all rationally generated cones~$K$ (see Theorem~\ref{thm:RationallyGeneratedImpliesIR}).
In case the cone is IR, we can in principle compute
{\em rational $\CPK$-factorizations} for matrices in $\interior \CPK$.

However, not all cones are IR, as we show by a
$2$-dimensional example cone~$K$ with an irrational generator in Section~\ref{sec:Sqrt2Example}.
Here, we find quite a list of differences compared to the case when the IR property holds:
For instance, the Ryshkov set $\RyshkovK$ may not be a locally finite polyhedron and its extreme points are not necessarily perfect matrices.
These phenomena happen on the boundary, specifically in the set
$\bd \COPK \cap R_K$, which we describe in Sections~\ref{sec:InteriorF} and~\ref{sec:VerticesRyshkovSet}.

It is quite interesting to observe that the IR property does not fail for all cones with
irrational generators. It turns out that this appears to depend on
whether or not the involved irrationals are badly approximable or not
(see Definition~\ref{def:badly_approx}) from within $K$.
In Section~\ref{sec:e_example} we provide an example of another $2$-dimensional~$K$
with an irrational generator that is nonetheless IR.

Our paper is organized as follows.
In Section~\ref{sec:GeneralFramework} we provide the required notions and definitions and describe some results that carry over from the existing theories.
Section~\ref{sec:diophantine_approx} introduces some basics on Diophantine approximation needed for Sections~\ref{sec:RationallyGeneratedCones} and \ref{sec:Sqrt2Example}.
In the first of these two sections we prove that every rationally generated cone is IR and in the second we examine in detail a non-IR cone.
Section~\ref{sec:ApproxToCPK} deals with approximations to $\CPK$ obtained from perfect matrices (or extreme points of $\RyshkovK$) that allow finding certificates for (non-)membership.
They may also be used for approximation of generalized K-copositive optimization problems.
Finally, in the last Section~\ref{sec:OpenQuestions} we describe open questions and further research directions.

\section{A Generalized Framework for Perfectness} \label{sec:GeneralFramework}

\subsection{Basic Definitions and Notation}

Throughout this paper we assume that $K\subseteq \R^n$ is a
full-dimensional closed convex cone.
In particular, with any two vectors $x,y\in K$
also $\lambda x + \mu y \in K$ for all $\lambda,\mu\in\R_{\geq 0}$.

\subsubsection{Generalized Completely Positive and Copositive Matrices}
In the space of real symmetric $n\times n$ matrices $\Sym^n$
we consider the cone of {\em generalized completely positive matrices over $K$}
\begin{align*}
 \CPK &= \cone\{xx^{\sf T} : x \in K\} \\
 &=\left\{\sum_{i=1}^m \alpha_i x_i x_i^{\sf T} : m \in \mathbb{N},\alpha_i \in \mathbb{R}_{\geq 0}, x_i \in K, i = 1, \ldots, m\right\}.
\end{align*}
It is contained in the cone of positive semidefinite matrices
$\Sym^n_{\succcurlyeq 0}$, and equal to this cone if and only if $K=\R^n$.
If $K=\R^n_{\geq 0}$ is the nonnegative orthant, then
$\CPK=\CP^n$ is the classical cone of completely positive matrices.

If we like to certify that a symmetric matrix $A$ is in $\CPK$, then
we can give a {\em $\CPK$-factorization}
$$
A = \sum_{i=1}^m \alpha_i x_i x_i^{\sf T},
$$
where all $x_i \in K$ and $\alpha_i\geq 0$. We speak of
a {\em rational $\CPK$-factorization} if all $\alpha_i$ are
rational and the vectors $x_i$ are all from the set $K\cap \Z^n_{\geq 0}$.

Using the standard scalar product $\langle A, B \rangle = \Trace(AB) = \sum_{i,j=1}^n A_{ij}B_{ij}$
on $\Sym^n$  we get the dual cone of $\CPK$, that is,
{\em the cone of generalized copositive matrices over $K$}
\begin{align*}
 \COPK = (\CPK)^* &= \{Q \in \Sym^n : \langle A, Q \rangle \geq 0 \text{ for all } A \in \CPK \}\\
  &= \{ Q \in \Sym^n :  Q[x] \geq 0 \text{ for all } x\in K \},
\end{align*}
where we use the notation $Q[x] = x^T Q x$.
We always have $\Sym^n_{\succcurlyeq 0} \subseteq \COPK$ and equality holds if and only if $K=\R^n$.
For $K=\R^n_{\geq 0}$, we obtain the classical copositive cone.
We say that matrices in $\COPK$ are \emph{$K$-copositive}.

\subsubsection{Generalized Minima and Perfectness}
Next, we generalize the notions of the arithmetical minimum
(in the classical case $K=\R^n$)
and the copositive minimum (in the case $K=\R^n_{\geq 0}$):
For $Q\in\Sym^n$ we define the {\em $K$-copositive minimum} as
$$
\minCOP[K] Q := \inf \left\{ Q[z]  :  z \in K\cap \Z^n \setminus\{0\} \right\}
.
$$
Having a nonnegative copositive minimum is a characterization of membership in $\COPK$ (see Lemma~\ref{lemma:minCOP_COP_Char} below).

For $Q\in \COPK$ the set of integral vectors attaining the $K$-copositive minimum is
$$
\MinCOP[K] Q := \left\{ z \in K\cap \Z^n \setminus \{0\}  : Q[z] = \minCOP[K] Q \right\}.
$$

A matrix $Q \in \COPK$ is called {\em perfect $K$-copositive}
if it is uniquely determined by its $K$-copositive minimum and the
set of vectors in $\MinCOP[K] Q$ attaining it.
For $K=\R^n$ this gives the classical notion of a perfect matrix and
for $K=\R^n_{\geq 0}$ we call the matrix perfect copositive.
A matrix is perfect $K$-copositive if and only if
the {\em Voronoi cone} of $Q$
$$
{\mathcal{V}} (Q)
:=\cone\{ xx^{\sf T} : x \in \MinCOP[K] Q \} \subseteq\CPK
$$
is full-dimensional, that is, if it has dimension $\dim \Sym^n = \binom{n+1}{2}$.

We define the {\em generalized Ryshkov set over $K$} as
$$
\RyshkovK
=
\big\{ \, Q \in \Sym^n \; : \;  Q[z]\geq 1 \mbox{ for all } z\in K\cap \Z^n \setminus\{0\} \big\}
,
$$
hence as the set of symmetric matrices having a $K$-copositive minimum
of at least~$1$.

\begin{remark}\label{remark:VerticesExtremePoints}
 In the classical theories with $K = \R^n$ and $K = \R_{\geq 0}^n$ the perfect $K$-copositive matrices are precisely the vertices of $\RyshkovK$.
 The Voronoi cone of such a vertex is then the normal cone of $\RyshkovK$ at that vertex and this fact allows for the certificate application mentioned in the introduction.

 For general $K$, the set $\RyshkovK$ may also have extreme points instead of just vertices (see e.g. \cite{Barvinok} for definitions).
 When we generalize the polyhedral approximations to $\CPK$ in Section~\ref{sec:ApproxToCPK}, we thus take care to distinguish between vertices and extreme points.
\end{remark}

\subsubsection{Symmetry}
The classical notion of 
{\em arithmetical equivalence} for two matrices $Q_1, Q_2\in \Sym^n$
refers to the existence of a matrix $U\in \glnz$ with $Q_2= U^{\sf T} Q_1 U$.
Note that $\glnz$ is the linear symmetry group of the set of integral vectors~$\Z^n$.
A corresponding notion of equivalence of matrices for general $K$ can be obtained by replacing $\glnz$ with the linear symmetry group $G$ of $(K \cup (-K)) \cap \Z^n$.
Note that $G$ is the symmetry group of $\RyshkovK$, in the sense of
\begin{equation*}
 \mathrm{Sym} (\RyshkovK) = \{U \in \glnz : U^T \RyshkovK U = \RyshkovK \}.
\end{equation*}
Here, $(-U)^T Q (-U) = U^T Q U$ necessitates using the linear symmetry group of $K \cup (-K)$ instead of just $K$ for $\mathrm{Sym} (\RyshkovK) = G$ to hold.

\begin{example}
 Set $0 \leq k \leq n$.
 The cones $K = \R^k \times \R^{n-k}_{\geq 0}$ induce a family of theories lying naturally between the classical theory and the copositive case.
 The matrix (see \cite[Lemma~2.5]{CertificateAlgo})
 \begin{equation*}
  Q_{A_n} = \begin{pmatrix}
              2 & -1 & 0 &\dots & 0 \\
              -1 & 2 & \ddots & \ddots & \vdots \\
              0 & \ddots & \ddots & \ddots & 0 \\
              \vdots & \ddots & \ddots & 2 & -1 \\
              0 & \dots & 0 & -1 & 2
            \end{pmatrix}
 \end{equation*}
 is perfect copositive and has matching copositive and arithmetical minimum.
 Since $\R^n_{\geq 0} \subseteq \R^{k} \times \R^{n-k}_{\geq 0}$ it is thus also perfect $K$-copositive.
 The symmetry group of $\RyshkovK$, i.e. the subgroup of $\glnz$ giving the notion of generalized arithmetical equivalence, is isomorphic to $\GL_k(\Z) \times (\pm \mathrm{Sym}(n-k))$ for $n-k \neq 1$, where $\pm \mathrm{Sym}(n-k)$ denotes the group of $(n-k)\times(n-k)$ permutation matrices and their negatives.
 Note that for $n-k = 1$ the cone $K = \R^{n-1} \times \R_{\geq 0}$ induces the same $K$-copositive cone as $K = \R^n$, namely $\Sym_{\succcurlyeq 0}^n$, so the symmetry group is $\glnz$ then.
\end{example}

\subsection{Interior Ryshkov Property}
Both the classical arithmetical minimum and the copositive minimum may be used to characterize the membership of a symmetric matrix with the respective cone (cf. \cite[Lemma~1.1]{oertel2025computingcopositiveminimumrepresentatives}).
\begin{lemma}\label{lemma:minCOP_COP_Char}
 Let $K \in \{ \R^n, \R_{\geq 0}^n \}$ and $Q \in \Sym^n$.
 Then
 \begin{itemize}
  \item[(i)]  $\minCOP[K] Q > 0$ if and only if $Q$ is strictly K-copositive,
  \item[(ii)] $\minCOP[K] Q = 0$ if and only if $Q$ is K-copositive, but not strictly so, or
  \item[(iii)] $\minCOP[K] Q = -\infty$ if and only if $Q$ is not K-copositive.
 \end{itemize}
\end{lemma}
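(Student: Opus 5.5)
Since the three alternatives on each side are mutually exclusive and exhaustive ($\minCOP[K] Q$ is either positive, zero or $-\infty$ \emph{a priori} only after proof; and $Q$ is strictly copositive, copositive but not strictly, or not copositive), it suffices to prove three one-directional implications: strictly $K$-copositive $\Rightarrow \minCOP[K] Q>0$; $K$-copositive but not strict $\Rightarrow \minCOP[K] Q=0$; not $K$-copositive $\Rightarrow \minCOP[K] Q=-\infty$. The converses then follow because the right-hand conditions are pairwise exclusive. Throughout I use that $K$ is a cone with $K\cap\Z^n$ nonempty beyond $0$ and, crucially for both choices $K\in\{\R^n,\R^n_{\geq 0}\}$, that $K$ is rationally generated, so rational points are dense in $K$ and integral points lie on rays through them.

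For strict copositivity: $Q[x]>0$ on the compact set $K\cap\{\|x\|=1\}$, so $Q[x]\geq c\|x\|^2$ with $c>0$ for all $x\in K$; since nonzero integral vectors have norm at least $1$, $\minCOP[K] Q\geq c>0$. For the non-copositive case: pick $x\in K$ with $Q[x]<0$; by continuity there is an open neighbourhood in $K$ where $Q$ stays negative, and by density of rationals in $K$ (here one uses $K=\R^n$ or $\R^n_{\ge0}$, where $\Q^n\cap K$ is dense) there is a rational, hence after scaling an integral, $z\in K\cap\Z^n$ with $Q[z]<0$. Then $Q[kz]=k^2Q[z]\to-\infty$, giving $\minCOP[K] Q=-\infty$.

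The main obstacle is the middle case, copositive but not strictly: we have $\minCOP[K] Q\geq 0$ trivially, and must show the infimum is $0$ although the zero $x\in K\setminus\{0\}$ with $Q[x]=0$ may be irrational. If $x$ can be taken rational, scaling gives an integral zero and we are done. Otherwise I would use Dirichlet's simultaneous approximation: there are integers $q$ arbitrarily large and $z\in\Z^n$ with $\|qx-z\|<q^{-1/n}\le 1$, and for $K=\R^n_{\ge0}$ one checks $z$ can be chosen in $K$ (coordinates where $x_i=0$ give $z_i=0$; positive coordinates give $z_i\ge 0$ for large $q$ — reduce to the face containing $x$ in its relative interior). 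Writing $z=qx+e$ with $\|e\|<1$, copositivity with $Q[x]=0$ and $x$ a minimizer on $K$ gives $(Qx)^{\sf T} e$ controlled; expand $Q[z]=q^2Q[x]+2q\,x^{\sf T}Qe+Q[e]$. Since $x$ minimizes $Q$ over $K$, $Qx$ lies in the dual cone and is orthogonal to $x$; restricting to the face $F$ of $K$ containing $x$ in its relative interior (a coordinate subspace piece), $Qx$ vanishes on $\mathrm{span} F$, and choosing $z\in F$ makes $e\in\mathrm{span}F$, so the cross term vanishes and $Q[z]=Q[e]\le\|Q\|\,q^{-2/n}\to0$. Hence $\minCOP[K] Q=0$, completing the trichotomy.
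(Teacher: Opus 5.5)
Your proof is correct and follows essentially the paper's route: the paper only cites this lemma, but the ingredients it points to are the compactness argument of Lemma~\ref{lemma:strictly_cop_implies_pos_min} for the strict case, density of $\Q^n\cap K$ for (iii), and, for the hard middle case, the face-reduction-plus-Dirichlet argument of Theorem~\ref{thm:RationallyGeneratedImpliesIR}, which you specialize to coordinate faces by approximating $x$ directly rather than its coefficient vector. One small slip: the converses follow because the right-hand trichotomy is \emph{exhaustive} and the three left-hand values are pairwise exclusive, not because the right-hand conditions are pairwise exclusive.
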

The third property extends to all full-dimensional closed convex cones $K$, but in (i) only $\Leftarrow$ does and in (ii) only $\Rightarrow$ remains by Lemma \ref{lemma:strictly_cop_implies_pos_min} below and the denseness of $\Q^n \cap K$ in $K$.

\begin{example}\label{ex:valentin1}
 Set
 \begin{equation*}
  K = \cone \left\{ \begin{pmatrix} \sqrt{2} \\ 1 \end{pmatrix}, \, \begin{pmatrix} 0 \\ 1 \end{pmatrix} \right\} \subseteq \R^2.
 \end{equation*}
 Then
 \begin{equation*}
  Q = \begin{pmatrix}
       -1 & 0 \\
       0 & 2
      \end{pmatrix} \in \boundary \COPK,
 \end{equation*}
 since $Q\big[\vect{\sqrt{2}}{1}\big] = 0$ and for $\alpha, \beta \geq 0$ we have
 \begin{equation*}
  Q\left[ \alpha \begin{pmatrix} \sqrt{2} \\ 1 \end{pmatrix} + \beta  \begin{pmatrix} 0 \\ 1 \end{pmatrix} \right] = 4 \alpha \beta + 2 \beta^2 \geq 0.
 \end{equation*}
 But we also have $\minCOP[K] Q = 1$, because
 \begin{equation*}
  Q \left[ \begin{pmatrix} p \\ q \end{pmatrix} \right] = - p^2 + 2q^2
 \end{equation*}
 is equal to zero if and only if $\frac{p}{q} = \pm \sqrt{2}$.
 Since $Q$ is integral we have thus $Q\left[\vect{p}{q} \right] \geq 1$ for $\vect{p}{q} \in K \cap \Z^n\setminus\{0\}$. The vector $\vect{1}{1}$ is a minimal vector of $Q$.
\end{example}
We investigate this example thoroughly in Section \ref{sec:Sqrt2Example}, since it exhibits a very interesting behavior contrary to the classical theory.

Note that a matrix $Q \in \boundary \COPK$ having always $K$-copositive minimum $0$ is equivalent to the Ryshkov set $\RyshkovK$ being contained in the interior of $\COPK$.
This motivates the following definition.
\begin{defi}[Interior Ryshkov Property]\label{def:IR}
 A closed convex cone $K \subseteq \R^n$ has the \emph{Interior Ryshkov} property (\emph{IR} property), if $Q \in \boundary \COPK$ implies $\minCOP[K] Q = 0$.
 In this case we also say $K$ is \emph{IR}.
\end{defi}
This is the central property guaranteeing a certain niceness of the resulting theory, in the sense that almost everything works out as in the copositive case.
In Section \ref{sec:RationallyGeneratedCones} we prove that rationally generated cones have the IR property and that counterexamples are linked to the boundary of $K$.

\begin{remark}
 Cones that are not IR lead to theories that are excluded from the broader framework of Opgenorth \cite{Opgenorth01012001}:
 Having matrices with positive $K$-copositive minimum in the boundary implies the theory, or specifically the set
 \begin{equation*}
  \left\{zz^T : z\in K \cap \Z^n \setminus \{0\}\right\},
 \end{equation*}
to be what Opgenorth calls \emph{inadmissable}.
Translated to our context, he defines a theory as admissable, if for every sequence $(Q_i)_{i\in\N}$ of strictly $K$-copositive matrices converging to a matrix in $\boundary \COPK$, we also have $(\minCOP[K] Q_i)_{i\in\N}$ converging to $0$ (cf.~\cite[Def.~1.4]{Opgenorth01012001}).
\end{remark}

In the following we give several basic generalizations of results of the $K = \R_{\geq 0}^n$ case and elaborate on the role of the IR property.
We omit the proofs, since they are essentially the same as those in the stated references.
The proof of Theorem~\ref{thm:RationallyGeneratedImpliesIR} given in Section~\ref{sec:RationallyGeneratedCones} is very similar to the one of \cite[Lemma~2.3]{CP_Factorization_First} and may serve as an example how other proofs can be adapted.

The first result concerns matrices in $\interior \COPK$.
It is independent of the IR property and follows from the discreteness of $K \cap \Z^n$. See \cite[Lemma~2.2]{CertificateAlgo}.
\begin{lemma}\label{lemma:strictly_cop_implies_pos_min}
 Let $K$ be a closed convex cone and $Q \in \interior \COPK$.
 Then the $K$-copositive minimum is strictly positive, is attained, and attained by only finitely many vectors.
\end{lemma}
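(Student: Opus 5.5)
The plan is to derive all three assertions from one uniform lower bound: there is a constant $\lambda>0$ with $Q[x]\geq \lambda \|x\|^2$ for all $x\in K$. Once this holds, positivity, attainment and finiteness all follow from the discreteness of $\Z^n$.

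To get $\lambda$, use that $Q\in\interior\COPK$. Then there is an $\varepsilon>0$ such that $Q-\varepsilon I_n\in\COPK$, since the ball of radius $\varepsilon\|I_n\|$ around $Q$ lies in $\COPK$. Unfolding the definition of $\COPK$ gives $(Q-\varepsilon I_n)[x]\geq 0$, that is, $Q[x]\geq\varepsilon\|x\|^2$ for all $x\in K$. So we may take $\lambda=\varepsilon$.

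An equivalent route avoids choosing a ball. Assume $Q[x]=0$ for some $x\in K\setminus\{0\}$. Then $Q-\delta xx^{\sf T}$ is not $K$-copositive for any $\delta>0$, because $(Q-\delta xx^{\sf T})[x]=-\delta\|x\|^4<0$. This contradicts $Q$ being interior. Hence $Q>0$ on $K\setminus\{0\}$, and compactness of $K\cap S^{n-1}$ gives a positive minimum $\lambda$. The only point needing care is this passage from interiority to a uniform bound, and both arguments handle it.

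Now let $z\in K\cap\Z^n\setminus\{0\}$. Then $\|z\|\geq 1$, so $Q[z]\geq\lambda>0$, and therefore $\minCOP[K] Q\geq\lambda>0$. Because $K$ is full-dimensional, $K\cap\Z^n\setminus\{0\}$ is nonempty (it contains integer points in the interior of $K$), so we can fix some $z_0$ in it and set $c=Q[z_0]$. Every candidate $z$ for the infimum with $Q[z]\leq c$ satisfies $\|z\|^2\leq c/\lambda$. There are only finitely many integral vectors in this ball, so the infimum is a minimum over a finite set and is attained. Finally, every element of $\MinCOP[K] Q$ lies in this same finite ball, so $\MinCOP[K] Q$ is finite.
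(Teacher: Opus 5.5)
Your proof is correct and takes essentially the same route as the paper, which defers to \cite[Lemma~2.2]{CertificateAlgo}: interiority plus compactness gives a uniform bound $Q[x]\geq\lambda\|x\|^2$ on $K$, and discreteness of $\Z^n$ then yields positivity, attainment and finiteness. You were right to invoke the paper's standing full-dimensionality assumption to get $K\cap\Z^n\setminus\{0\}\neq\emptyset$, since attainment genuinely fails without it (e.g.\ for a ray with irrational direction).
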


The second result concerns the geometry of the Ryshkov set and follows for IR cones $K$ as in \cite[Lemma~2.4]{CP_Factorization_First}.
A \emph{locally finite polyhedron} is a convex set such that every intersection of the set with a polytope is a polytope as well.
\begin{thm}
 Let $K$ be a full-dimensional closed convex IR cone. Then $\RyshkovK$ is a locally finite polyhedron.
\end{thm}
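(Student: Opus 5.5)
We have to show: for any polytope $P \subseteq \Sym^n$, the set $\RyshkovK \cap P$ is a polytope. The plan is to exhibit a finite set $Z \subseteq K\cap\Z^n\setminus\{0\}$ such that
\begin{equation*}
\RyshkovK \cap P = \{Q \in P : Q[z]\geq 1 \text{ for all } z \in Z\}.
\end{equation*}
The right-hand side is an intersection of a polytope with finitely many halfspaces, hence a polytope.

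First I would show that $\RyshkovK \subseteq \interior \COPK$. Since $\RyshkovK$ is an intersection of the closed halfspaces $\{Q : Q[z]\ge 1\}$, it is closed and convex. Take $Q \in \RyshkovK$.
\begin{itemize}
\item If $Q \notin \COPK$, the extension of Lemma~\ref{lemma:minCOP_COP_Char}(iii) to general $K$ gives $\minCOP[K] Q = -\infty$. This contradicts $Q\in\RyshkovK$.
\item If $Q \in \boundary\COPK$, the IR property gives $\minCOP[K] Q = 0 < 1$. Again this is a contradiction.
\end{itemize}
Hence $\RyshkovK \subseteq \interior\COPK$, and this is the only place the IR hypothesis enters. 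The compact convex set $C := \RyshkovK \cap P$ therefore lies in $\interior \COPK$. We may assume $C \neq \emptyset$.

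Next, since $C$ is compact and contained in the open set $\interior\COPK$, I want a uniform positivity bound. Fix a compact cross-section $S = \{x \in K : \|x\| = 1\}$. The function $(Q,x) \mapsto Q[x]$ is continuous and positive on $C\times S$. Positivity holds because $Q$ in the interior gives $Q - \varepsilon I \in \COPK$ for some $\varepsilon>0$, so $Q[x]\ge\varepsilon\|x\|^2$. By compactness there is $\lambda>0$ with $Q[x] \geq \lambda\|x\|^2$ for all $Q\in C$ and $x \in K$.

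This bound applies only to $C$, not to all of $P$, which is the main subtlety. To handle it, I would replace $P$ by a slightly smaller region where the bound still holds. I expect this to be the main obstacle. The first attempt: by compactness, choose a compact neighborhood $U \subseteq \interior\COPK$ of $C$. The same argument gives $\lambda'>0$ with $Q[x]\ge\lambda'\|x\|^2$ on $U\times K$. Now let
\begin{equation*}
Z := \{ z \in K\cap\Z^n\setminus\{0\} : \|z\|^2 \le 1/\lambda' \},
\end{equation*}
which is finite, and set $C' := \{Q\in P : Q[z]\ge 1 \ \forall z\in Z\}$, a polytope containing $C$.

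It remains to show $C' = C$. Suppose $Q \in C'\setminus C$. Pick $Q_0 \in C$ and walk along the segment from $Q_0$ to $Q$, which lies in the convex set $C'$. Let $Q_1$ be the last point of the segment in $C$; it lies on the relative boundary of $C$ within the segment. Points just beyond $Q_1$ on the segment still lie in $U$, because $U$ is a neighborhood of $C$. For such a point $Q'$ and any $z \notin Z$ we have $Q'[z]\ge\lambda'\|z\|^2>1$. For $z\in Z$ we have $Q'[z]\ge1$ since $Q'\in C'$. Hence $Q'\in\RyshkovK\cap P = C$, contradicting the maximality of $Q_1$. Therefore $C'=C$, and $C$ is a polytope.
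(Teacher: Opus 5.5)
Your proof is correct and follows essentially the route the paper intends. The paper omits the proof and defers to Lemma~2.4 of the cited work (where $\PK(A,\lambda)$ is shown to be a polytope), with the IR property supplying $\RyshkovK\subseteq\interior\COPK$; compactness then gives a uniform bound $Q[x]\ge\lambda'\|x\|^2$, so only finitely many lattice vectors are relevant. You work with an arbitrary polytope directly and handle the local-to-global step explicitly via the compact neighborhood $U$ and the segment argument, which cleanly settles the one subtle point.
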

If $K$ is not IR, then $\RyshkovK$ may fail to be a locally finite polyhedron, as we see in Section~\ref{sec:Sqrt2Example}.
In this case it is not immediately clear how the classical equality of vertices of $\RyshkovK$ and (suitably scaled) perfect $K$-copositive matrices generalizes and how different perfect $K$-copositive matrices can be related to each other.
However, in the interior of $\COPK$ the Ryshkov set then still admits a locally polyhedral structure as the following Lemma implies.
In particular, it shows that all extreme points of $\RyshkovK$ in the interior of $\COPK$ are vertices of $\RyshkovK$, because in this context the Voronoi cone of $Q \in \RyshkovK$ is the normal cone of $\RyshkovK$ at $Q$ (see also Section~\ref{sec:FailureOfIR}).

\begin{lemma}{\label{lemma:extreme_points_are_vertices}}
 Let $K$ be a full-dimensional closed convex cone and let $Q \in \RyshkovK$ such that $Q \in \interior \COPK$ with $\V(Q)$ not full-dimensional.
 Then $Q$ is contained in the relative interior of some line segment contained in $\RyshkovK$.
\end{lemma}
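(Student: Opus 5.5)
Since $\V(Q)$ is not full-dimensional, its linear span is a proper subspace of $\Sym^n$. We therefore pick a nonzero $H \in \Sym^n$ orthogonal to every $xx^{\sf T}$ with $x \in \MinCOP[K] Q$, that is, $H[x] = 0$ for all minimal vectors $x$. Here $\MinCOP[K] Q$ is nonempty and finite by Lemma~\ref{lemma:strictly_cop_implies_pos_min}, because $Q \in \interior \COPK$. We then show that $Q \pm \varepsilon H \in \RyshkovK$ for some small $\varepsilon > 0$. This gives the claim, since $Q$ is the midpoint of the segment $[Q-\varepsilon H, Q+\varepsilon H]$. Since $Q \in \RyshkovK$, we have $m := \minCOP[K] Q \geq 1$.

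For the minimal vectors nothing needs to be checked: $(Q\pm\varepsilon H)[x] = Q[x] = m \geq 1$. The main work is a uniform control of all other integral vectors $z \in K \cap \Z^n \setminus\{0\}$. We use that $Q$ is in the interior of $\COPK$. Then there is $\delta > 0$ with $Q - \delta I \in \COPK$, i.e.\ $Q[z] \geq \delta \|z\|^2$ for all $z \in K$; this is the step where interiority is essential. Also $|H[z]| \leq \|H\| \, \|z\|^2$. Hence for $\varepsilon \leq \delta/(2\|H\|)$ we get $(Q\pm\varepsilon H)[z] \geq \tfrac{\delta}{2}\|z\|^2$. This is at least $m$ as soon as $\|z\|^2 \geq 2m/\delta$. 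Thus only integral vectors in a bounded ball remain, and there are finitely many of them.

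Among these finitely many vectors in $K\cap\Z^n\setminus\{0\}$ that are not in $\MinCOP[K] Q$, we have $Q[z] > m$ strictly. Taking the minimum over this finite set gives a gap $\gamma > 0$ with $Q[z] \geq m + \gamma$. Choosing $\varepsilon$ additionally so small that $\varepsilon |H[z]| \leq \gamma$ for these finitely many $z$ yields $(Q\pm\varepsilon H)[z] \geq m \geq 1$. So both $Q\pm\varepsilon H$ lie in $\RyshkovK$, and $Q$ lies in the relative interior of a nondegenerate segment in $\RyshkovK$, because $H\neq 0$.

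The only delicate point is the tail estimate, which needs the uniform bound $Q[z]\ge \delta\|z\|^2$ on $K$. This fails on $\boundary\COPK$, which is exactly where non-IR phenomena arise. The remaining steps are finite bookkeeping.
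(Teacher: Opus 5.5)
Your proof is correct and is essentially the paper's argument. The paper also takes $B\in\V(Q)^\perp$ and uses interiority in the form ``$Q$ is bounded away from zero on $K$ intersected with the unit sphere'' (your $Q[z]\ge\delta\|z\|^2$) to dominate the perturbation for large integral vectors, while the remaining vectors are controlled by $B[z]=0$ on minimal vectors and by $Q[z]$ exceeding the $K$-copositive minimum otherwise. The difference is only in presentation: the paper argues by contradiction with a sequence $z_k$ and treats $Q+\lambda B$ and $Q-\mu B$ separately, whereas you choose $\varepsilon$ directly and explicitly for both signs at once.
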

\begin{proof}
 Take $B \in \V(Q)^\perp$, i.e. $B[z] = 0$ for all $z \in \MinCOP[K] Q$.
 We show by contradiction that there is some $\lambda > 0$ such that $Q + \lambda B \in \RyshkovK$.
 Assume that for each $k \in \N$ there is an integral vector $z_k \in K \cap \Z^n \setminus \{0\}$ with
 \begin{equation*}
   \left(Q + \frac{1}{k} B\right)[z_k] < 1.
 \end{equation*}
 By choice of $B$ and $Q \in \RyshkovK$, the sequence $(z_k)_k$ contains infinitely many (pairwise different) integral vectors.
 In particular, $\|z_k\| \to \infty$ for $k \to \infty$.
 Thus,
  \begin{equation*}
  \left( Q + \frac{1}{k} B\right)[z_k] = \| z_k\|^2 \left( Q \left[ \frac{z_k}{\|z_k\|} \right] + \frac{1}{k} B \left[ \frac{z_k}{\|z_k\|} \right] \right)
 \end{equation*}
 cannot remain bounded for $k\to\infty$:
 The continuous map $x \mapsto Q[x]$ is bounded away from zero on $K$ intersected with the unit sphere, because $Q \in \interior \COPK$.
 Thus, the term inside the parentheses is eventually positive.
 We conclude that there is some $\lambda > 0$ such that $Q + \lambda B \in \RyshkovK$ and analogously also some $\mu > 0$ with $Q - \mu B \in \RyshkovK$.
 \end{proof}

\subsection{Embedding the Classical Theory}

Finally, we can embed the classical theory of perfect quadratic forms in this new framework.
In \autocite[Theorem~6.1]{PerfectCopositive} it is proved that every classically perfect matrix admits an arithmetically equivalent perfect copositive one.
We can extend that result from $\COP$ to $\COPK$ for full-dimensional closed convex cones $K$.

\begin{lemma}\label{lemma:basis_in_each_cone}
 Let $K \subseteq \R^n$ be a full-dimensional closed convex cone.
 Then $K \cap \Z^n$ contains a lattice basis of $\Z^n$.
\end{lemma}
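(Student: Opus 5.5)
Since $K$ is full-dimensional, its interior is nonempty. The plan is to take an interior point $x \in \interior K$ together with a small ball $B(x,\varepsilon) \subseteq K$, and to find inside the cone over this ball a lattice basis of $\Z^n$. We may choose $x$ rational, since rational points are dense in the open set $\interior K$; scaling then makes $x=v\in\Z^n$ primitive. Because $K$ is a cone, it contains $\cone B(v,\varepsilon)$, and every point $y$ with $\|y/t - v\| < \varepsilon$ for some $t>0$ lies in $K$.

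Concretely, let $e_1,\dots,e_n$ be the standard basis and choose a large integer $N$. Consider the vectors
\begin{equation*}
 b_i = N v + e_i, \qquad i = 1,\dots,n .
\end{equation*}
Since $\|b_i/N - v\| = 1/N < \varepsilon$ for $N$ large, all $b_i$ lie in $K \cap \Z^n$. These need not form a basis, however, since their determinant is $\det(N v \mathbf{1}^{\sf T} + I) = 1 + N\,\mathbf{1}^{\sf T}v$ by the matrix determinant lemma. We therefore modify the construction. Take $b_1 = Nv + e_1$ and, for $i\ge 2$, $b_i = Nv + e_1 + e_i$. Using the vectors $u_i = b_i - b_1 = e_i$ for $i \geq 2$, the lattice generated is spanned by $b_1, e_2,\dots,e_n$. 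The determinant of this system equals the first coordinate of $b_1$, namely $N v_1 + 1$, which is generally not $\pm1$. So some choice of the base direction is needed.

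A cleaner route is to use unimodular invariance. Pick any $U \in \glnz$ whose first column is a primitive integral vector $w\in\interior K$; such a $U$ exists because every primitive vector extends to a lattice basis. In the coordinates $y = U^{-1}x$ the cone $U^{-1}K$ contains $e_1$ in its interior, so for $N$ large it contains the vectors $e_1$ and $N e_1 + e_i$ for $i = 2,\dots,n$. These form a lattice basis, since the matrix is unitriangular up to column operations: subtracting $N$ times $e_1$ from each $N e_1 + e_i$ recovers the $e_i$. Mapping back with $U$ gives the basis $w,\; Nw + Ue_i$ of $\Z^n$, all of whose elements lie in $K$ because $(Nw+Ue_i)/N \to w \in \interior K$.

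The key steps, in order, are the following. First, find a primitive integral $w$ in $\interior K$ by density of rationals and cone scaling. Second, complete $w$ to a unimodular matrix $U$. Third, show that $Nw+Ue_i\in K$ for large $N$, using the openness of $\interior K$ and the cone property. Fourth, check unimodularity via the elementary column operations. The only mildly nontrivial ingredient is the standard fact that a primitive vector extends to a basis of $\Z^n$, which follows from the elementary divisor or Smith normal form, or by induction using Bézout.
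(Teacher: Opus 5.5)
Your final argument (the ``cleaner route'') is correct, but it takes a genuinely different route from the paper's proof.

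\textbf{The paper's route.} The paper does not construct the basis explicitly. It cites the classical inductive proof that $\Z^n$ has a lattice basis (Gruber--Lekkerkerker, Ch.~1.3, Thm.~2) and remarks that every vector chosen in that proof can be taken in $K$.

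\textbf{Your route.} You take a single primitive $w \in \interior K$ and complete it to a basis $w, u_2, \dots, u_n$ with $u_i = Ue_i$. You then shear, replacing the basis by $w, Nw+u_2, \dots, Nw+u_n$.
\begin{itemize}
\item This system equals $UM$, where $M$ has columns $e_1, Ne_1+e_2, \dots, Ne_1+e_n$. Since $M$ is upper unitriangular, its determinant is $\det U = \pm 1$, so it is again a lattice basis.
\item It lies in $K$ for large $N$, because $w + u_i/N \to w \in \interior K$ and $\interior K$ is closed under positive scaling.
\end{itemize}

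\textbf{What each approach buys.} Your proof is explicit and easy to check, and its only lattice-theoretic input is that a primitive vector extends to a basis. It uses only that $K$ is invariant under positive scaling and has nonempty interior; neither convexity nor closedness is needed. It also proves more: a basis inside $\interior K$ whose directions are arbitrarily close to any prescribed rational interior direction. The paper's route is shorter on the page but leaves implicit why the choices in the inductive proof can be made inside $K$. A natural way to justify that remark is to start from a first basis vector in $\interior K$ and add a large multiple of it to each later vector, which is essentially your shear.

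In a final write-up, drop the two exploratory constructions ($b_i = Nv+e_i$ and its modification). You correctly diagnose that they fail, but they are not part of the proof.
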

\begin{proof}
  This is a slight refinement of a classical argument showing the existence of a lattice basis (of a discrete subgroup of $\R^n$, i.e. of $\Z^n$ in our case).
  For this argument we refer to \cite[Chapter~1.3,~Thm.~2]{GruberLekkerkerker} with the following remark:
  Since $K$ is full-dimensional and contains rational and thus integral points, every time one chooses a vector in the proof given there, one can choose that vector in $K$, leading to a basis contained in $K$.
\end{proof}
\begin{thm}
 Let $K \subseteq \R^n$ be a full-dimensional closed convex cone.
 Let further $Q \in \Sym_{\succ 0}^n$ be classically perfect.
 Then there exists a perfect $K$-copositive matrix $Q_K \in \COPK$ that is arithmetically equivalent to $Q$.
\end{thm}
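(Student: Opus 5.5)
The plan is to follow the strategy of \autocite[Theorem~6.1]{PerfectCopositive}: transform $Q$ by a unimodular matrix so that all its classical minimal vectors land (up to sign) inside $K$. Then the classical minimum and the $K$-copositive minimum coincide, and the minimal vectors are preserved.

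First, let $\mathrm{Min}\, Q = \{\pm v_1, \dots, \pm v_s\}$ be the classical minimal vectors of $Q$, with classical minimum $m>0$. Since $Q$ is positive definite, $Q_U := U^{\sf T} Q U$ has minimal vectors $U^{-1} v_i$ for any $U \in \glnz$, so the task is to choose $U$ with $U^{-1} v_i \in K \cup (-K)$ for all $i$. Lemma~\ref{lemma:basis_in_each_cone} only gives a basis $b_1,\dots,b_n \in K\cap\Z^n$. By itself this places only the $n$ basis vectors in $K$, not all the $v_i$, so I need more room.

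To get that room, I would use a cone $C$ that is strictly smaller than $K$. Pick an interior point $c \in \interior K$ and a small closed ball $B(c,r) \subseteq K$, and let $C = \cone B(c,r)$. This $C$ is a full-dimensional closed convex cone, so applying Lemma~\ref{lemma:basis_in_each_cone} to $C$ yields a lattice basis $b_1,\dots,b_n$ in $C$. The main obstacle is then to arrange that every minimal vector, and not just the basis vectors, lies in $\pm K$.

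My first attempt is a ``large multiple'' trick. Replace the basis by $b_1' = b_1$ and $b_j' = b_j + N b_1$ for $j \ge 2$ with $N$ large; this is still a lattice basis, and the $b_j'$ all point nearly in direction $b_1$. Alternatively, take $U$ with $U e_1 = b_1$, where $b_1 \in \interior K$ is primitive, and complete it to a unimodular $U$ whose columns are $b_1$ and $b_j + N_j b_1$.

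The key point is to choose $U$ so that every minimal vector $w$ of $Q_U$ (writing $w = U^{-1}v_i$) has $Uw$ close in direction to $\pm b_1$. Equivalently, I would choose the change of coordinates so that each $v_i$ maps to a vector with a large first coordinate relative to the others. The cleanest route is to construct $U^{-1}$ as a shear $S$ with $S v_i$ nearly parallel to $\pm e_1$ for all $i$. Because the $v_i$ form a finite set, I can first apply an integral change of basis in which no $v_i$ has first coordinate zero, for instance using a generic primitive integral functional $\ell$ with $\ell(v_i)\neq 0$ for all $i$, extended to a basis of the dual lattice. Then I apply $S : x \mapsto (x_1, x_2 - N_2 x_1, \dots)$? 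That shear reduces the other coordinates only if I pick the $N_j$ carefully, so it is better to think projectively. Every $v_i$ with $\ell(v_i)\ne 0$ is mapped by $x \mapsto x + N\ell(x) c'$, with $c'$ an integral vector in $\interior K$, to $v_i + N\ell(v_i)c'$. For large $N$ this vector lies in $\pm \interior K$, since $\interior K$ is an open cone around $c'$. Moreover, $T = I + N c'\ell^{\sf T}$ is integral with determinant $1 + N\ell(c')$, so I would choose $\ell$ with $\ell(c') = 0$ to make it unimodular. Such $\ell$ with $\ell(v_i) \neq 0$ exist whenever no $v_i$ is parallel to $c'$, and I can choose $c'$ to avoid the finitely many lines $\R v_i$.

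With $T$ in hand, set $Q_K := T^{-\sf T} Q T^{-1}$. Its minimal vectors $Tv_i$ lie in $\pm K$, and $Q_K \in \Sym^n_{\succ 0} \subseteq \COPK$. The $K$-copositive minimum is at least the classical minimum $m$, and it is attained at those of $\pm T v_i$ that lie in $K$. Hence $\MinCOP[K] Q_K$ contains one of $\pm Tv_i$ for each $i$. Since $Q_K$ is classically perfect, being arithmetically equivalent to $Q$, these vectors determine it, and so $\V(Q_K)$ is full-dimensional.
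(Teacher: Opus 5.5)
Your argument is correct, but it takes a different route from the paper. The paper first invokes \cite[Theorem~6.1]{PerfectCopositive} to assume that all minimal vectors of $Q$ lie in $\pm\R^n_{\geq 0}$. It then uses Lemma~\ref{lemma:basis_in_each_cone} to get a lattice basis $u_1,\dots,u_n$ of $\Z^n$ inside $K$, so the unimodular $U=(u_1\cdots u_n)$ maps the orthant, and with it every minimal vector, into $K$.

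You replace both ingredients with a single integral transvection $T=I+Nc'\ell^{\sf T}$, where $c'\in\interior K\cap\Z^n$ avoids the lines $\R v_i$ and $\ell\in\Z^n$ satisfies $\ell(c')=0\neq\ell(v_i)$. This $T$ has determinant $1$, and for large $N$ it pushes the finitely many minimal vectors into $\pm\interior K$. The remaining steps are right: $\minCOP[K] Q_K=m$ is attained at one vector of each pair $\pm Tv_i$, and the matrices $Tv_i(Tv_i)^{\sf T}$ span $\Sym^n$.

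The paper's proof is two lines once the cited theorem is available. Yours is self-contained and elementary, and the special case $K=\R^n_{\geq0}$ recovers exactly the form of \cite[Theorem~6.1]{PerfectCopositive} the paper relies on. It also gives the stronger conclusion that the minimal vectors lie in $\pm\interior K$, clustered around a prescribed integral direction. Your normalization $Q_K=T^{-\sf T}QT^{-1}$ is the correct one: for the paper's $U$, the matrix whose minimal vectors are the $Uv_i\in K$ is $U^{-\sf T}QU^{-1}$, not $U^{\sf T}QU$ as written.

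For a clean write-up, make three changes. First, drop the unused exploration: the cone $\cone B(c,r)$, the ``large multiple'' basis and the shear $S$. Second, justify the existence of $\ell$: the rank-$(n-1)$ lattice $c'^{\perp}\cap\Z^n$ spans $c'^{\perp}$, so it cannot be covered by the finitely many subspaces $c'^{\perp}\cap v_i^{\perp}$, which are proper exactly because $v_i\notin\R c'$. Third, note that the case $n=1$, where $c'$ cannot avoid $\R v_i$, is trivial, since one of the minimal vectors $\pm1$ already lies in $K$.
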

\begin{proof}
 By \autocite[Theorem~6.1]{PerfectCopositive} we may assume that $Q$ is already perfect copositive and that all integral vectors attaining the arithmetical minimum are in $\R_{\geq 0}^n$.
 Lemma \ref{lemma:basis_in_each_cone} yields a lattice basis $\{u_1,\, \ldots,\, u_n\}$ of $\Z^n$ contained in $K$.
 Set
 \begin{equation*}
  U := \begin{pmatrix}
        | & & | \\
        u_1 & \dots & u_n \\
        | & & |
       \end{pmatrix} \in \GL_n(\Z).
 \end{equation*}
 Then $U \R_{\geq 0}^n \subseteq K$ and $U^T Q U$ is uniquely determined by its $K$-copositive minimum and its minimal vectors in~$K$.
\end{proof}

\section{Preliminaries from Diophantine Approximation}\label{sec:diophantine_approx}
The Interior Ryshkov property appears to be linked to how well the extreme ray generators of $K$ can be approximated by rational vectors.
For this reason we quickly review some needed concepts in this section and refer to the book by Schmidt \cite{Schmidt_Diophantine_Approximation} for details.

\begin{defi}[Badly Approximable]\label{def:badly_approx}
 A real number $\alpha$ is called \emph{badly approximable} if there is a constant $C > 0$, such that
 \begin{equation*}
  \left| \alpha - \frac{p}{q} \right| > \frac{C}{q^2}
 \end{equation*}
 for all $p,\,q \in \Z$, $q > 0$.
\end{defi}
Note that this definition captures a notion that is in contrast to the classical theorems of Dirichlet and Hurwitz, which state that there are infinitely many different rational numbers $\frac{p}{q}$ satisfying the reverse inequality for irrational $\alpha$.
For badly approximable numbers the constants in these theorems cannot be improved arbitrarily.

The approximability of an irrational number $\alpha$ is dictated by its \emph{simple continued fractions} expansion
\begin{equation*}
[a_0; a_1, \, \ldots] = a_0 + \frac{1}{[a_1;a_2, \, \ldots]}.
\end{equation*}
The element $a_i$ is called the \emph{$i$-th partial quotient} and $[a_0;a_1, \ldots, a_i]$ is called the \emph{i-th convergent} of $\alpha$.
They are in a sense the best rational approximations to $\alpha$ and satisfy in particular
\begin{equation}
 \left| \alpha - \frac{p}{q} \right| < \frac{1}{q^2} \label{eq:convergents_good_approx}
 \end{equation}
with $\frac{p}{q}$ being the convergent.
Depending on the parity of $i$ the $i$-th convergent is either larger or smaller than $\alpha$.

The following theorem dates back to Hurwitz \cite{Hurwitz1891} (cf. also \cite[Notes on Chapter~XI]{HardyWright}).
\begin{thm}
 An irrational number $\alpha$ is badly approximable if and only if it has bounded partial quotients.
\end{thm}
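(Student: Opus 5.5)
The plan is to argue directly with the convergents $p_k/q_k = [a_0;a_1,\ldots,a_k]$ of $\alpha$, using three standard facts from continued fraction theory (see \cite{Schmidt_Diophantine_Approximation}):
\begin{itemize}
 \item[(a)] the recursion $q_{k+1} = a_{k+1}q_k + q_{k-1}$ with $q_{-1}=0$, $q_0 = 1$;
 \item[(b)] the two-sided estimate
 \begin{equation*}
  \frac{1}{q_k(q_{k+1}+q_k)} < \left|\alpha - \frac{p_k}{q_k}\right| < \frac{1}{q_kq_{k+1}},
 \end{equation*}
 which refines \eqref{eq:convergents_good_approx};
 \item[(c)] the best approximation property: if $0<q<q_{k+1}$ and $p\in\Z$, then $|q\alpha - p| \geq |q_k\alpha - p_k|$.
\end{itemize}

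\emph{Badly approximable implies bounded partial quotients.} Suppose $|\alpha - p/q| > C/q^2$ for all $p,q$ with $q>0$. Apply this to $p_k/q_k$ and combine it with the upper bound in (b) and with $q_{k+1}\geq a_{k+1}q_k$ from (a):
\begin{equation*}
 \frac{C}{q_k^2} < \frac{1}{q_kq_{k+1}} \leq \frac{1}{a_{k+1}q_k^2}.
\end{equation*}
Hence $a_{k+1} < 1/C$ for all $k\geq 0$, so the partial quotients are bounded.

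\emph{Bounded partial quotients imply badly approximable.} Suppose $a_i\leq M$ for all $i\geq 1$. Let $p/q$ be given with $q\geq 1$. Since $q_0=1$ and $q_k\to\infty$, there is a unique $k\geq 0$ with $q_k\leq q<q_{k+1}$. By (c) and the lower bound in (b),
\begin{equation*}
 |q\alpha - p| \geq |q_k\alpha-p_k| > \frac{1}{q_{k+1}+q_k}.
\end{equation*}
By (a), $q_{k+1}+q_k \leq (M+2)q_k \leq (M+2)q$. Dividing by $q$ then gives
\begin{equation*}
 \left|\alpha - \frac{p}{q}\right| > \frac{1}{(M+2)q^2},
\end{equation*}
so we may take $C = 1/(M+2)$.

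The only step that is not routine is the best approximation property (c); I expect it to be the main obstacle if one wants a self-contained proof. My approach would be the following. Because $p_{k+1}q_k - p_kq_{k+1} = \pm 1$, one can write $(p,q) = u(p_k,q_k) + v(p_{k+1},q_{k+1})$ with $u,v\in\Z$. The condition $0<q<q_{k+1}$ forces $u\neq 0$, and either $v=0$ or $u$ and $v$ have opposite signs. The quantities $q_k\alpha-p_k$ and $q_{k+1}\alpha - p_{k+1}$ also have opposite signs, since consecutive convergents lie on opposite sides of $\alpha$. Therefore the two terms in
\begin{equation*}
 q\alpha - p = u(q_k\alpha-p_k) + v(q_{k+1}\alpha-p_{k+1})
\end{equation*}
do not cancel, and we get $|q\alpha-p|\geq |u|\,|q_k\alpha-p_k| \geq |q_k\alpha - p_k|$. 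The edge case $k=0$ needs a brief separate check. Alternatively, (c) can simply be cited from \cite{Schmidt_Diophantine_Approximation}.
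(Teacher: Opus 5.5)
Your proof is correct, and it takes a different route from the paper only in the sense that the paper gives no argument at all. It attributes the theorem to Hurwitz and points to the notes on Chapter~XI of Hardy--Wright.

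Your route is the standard self-contained one: combine the two-sided error bound (b) for convergents with Lagrange's best-approximation property (c).
\begin{itemize}
\item Your sketch of (c) is complete. It uses the unimodular decomposition $(p,q)=u(p_k,q_k)+v(p_{k+1},q_{k+1})$ and the alternating signs of $q_k\alpha-p_k$.
\item The separate check for $k=0$ is not actually needed. Ruling out $u,v>0$ only uses $q_0=1\ge 1$, and the sign alternation holds from $k=0$ on.
\item If $a_1=1$, the hypothesis $0<q<q_1$ is vacuous. Otherwise your argument covers $k=0$ as written.
\end{itemize}

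What your approach gives beyond the bare citation is explicit constants, and these tie in with how the paper uses the theorem.
\begin{itemize}
\item For $\sqrt2=[1;2,2,\ldots]$ you have $M=2$, so your constant $C=1/(M+2)$ is exactly the value $\tfrac14$ the paper states.
\item Your forward direction, restricted to even $k$ (where $p_k/q_k<\alpha$), gives $0<\alpha-p_k/q_k<1/(a_{k+1}q_k^2)$.
\item For $e$, the partial quotients $a_{k+1}$ with $k$ even ($a_5=4$, $a_{11}=8$, and so on) are unbounded. The previous bound therefore gives precisely the one-sided approximation by convergents below $e$ that the paper takes from Schmidt in its remark on Euler's number.
\end{itemize}
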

\begin{example}
 We have $\sqrt{2} = [1;2,2, \ldots]$, so $\sqrt{2}$ is badly approximable.
 A suitable constant $C$ would be, for example, $\frac{1}{4}$.
\end{example}

Finally, in preparation for Section \ref{sec:RationallyGeneratedCones}, we state Dirichlet's Theorem for simultaneous rational approximation. A proof can be found, for example, in \cite[Thm.~5.2.1]{GeometricAlgorithmsAndCombinatorialOptimization}.
\begin{thm}\label{thm:Dirichlet}
 Let $\alpha_1, \ldots, \alpha_k$ be real numbers and let $\varepsilon$ be a number with $0 < \varepsilon < 1$.
 Then there exist integers $p_1, \ldots, p_k$ and a natural number $q$ with $1 \leq q \leq \varepsilon^{-k}$ such that
 \begin{equation*}
  \left| \alpha_i - \frac{p_i}{q} \right| \leq \frac{\varepsilon}{q} \text{ for all } i = 1, \ldots, k.
 \end{equation*}
\end{thm}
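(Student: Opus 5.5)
The approach is Minkowski's convex body theorem applied to a suitable symmetric convex body in $\R^{k+1}$. This avoids the integer-rounding problems of a direct pigeonhole argument when $\varepsilon^{-1}$ is not an integer.

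Set $T := \varepsilon^{-k}$ and consider the set
\begin{equation*}
 C := \left\{ (x_0, x_1, \ldots, x_k) \in \R^{k+1} : |x_0| \leq T, \ |x_0 \alpha_i - x_i| \leq \varepsilon \text{ for } i = 1, \ldots, k \right\}.
\end{equation*}
$C$ is an intersection of finitely many symmetric slabs, hence compact, convex and centrally symmetric about the origin. It is the image of the box $[-T,T] \times [-\varepsilon,\varepsilon]^k$ under the linear map $(x_0, y_1, \ldots, y_k) \mapsto (x_0, x_0\alpha_1 - y_1, \ldots, x_0\alpha_k - y_k)$. This map is a shear with determinant $\pm 1$, so
\begin{equation*}
 \mathrm{vol}(C) = 2T \cdot (2\varepsilon)^k = 2^{k+1} \varepsilon^{-k}\varepsilon^k = 2^{k+1}.
\end{equation*}
I will invoke Minkowski's theorem in its version for compact bodies: a compact convex body symmetric about $0$ with volume at least $2^{k+1}$ contains a nonzero point of $\Z^{k+1}$. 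This gives $(q, p_1, \ldots, p_k) \in C \cap \Z^{k+1} \setminus \{0\}$.

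It remains to exclude $q = 0$ and to fix the sign. If $q = 0$, then $|p_i| \leq \varepsilon < 1$ forces $p_i = 0$ for all $i$, contradicting nonzeroness; this is exactly where $\varepsilon < 1$ is used. Since $C$ is symmetric, we may replace the point by its negative if necessary and assume $q \geq 1$. Then $q \leq T = \varepsilon^{-k}$ and $|q\alpha_i - p_i| \leq \varepsilon$. Dividing by $q$ gives $|\alpha_i - p_i/q| \leq \varepsilon/q$, as claimed.

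The only delicate step is the boundary case of Minkowski's theorem (volume equal to, not strictly larger than, $2^{k+1}$). I handle it either by citing the compact version directly, or by applying the strict version to $(1+1/m)C$, which has volume $> 2^{k+1}$. Each of these bodies contains a nonzero lattice point, and all of them lie in the bounded set $2C$, which contains only finitely many lattice points. Hence some nonzero lattice point lies in $(1+1/m)C$ for infinitely many $m$. Because $C$ is closed, that point lies in $\bigcap_m (1+1/m)C = C$.
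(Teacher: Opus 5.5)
Your proof is correct. The shear has determinant $\pm 1$, so $\mathrm{vol}(C) = 2^{k+1}$ exactly. Your limiting argument with $(1+1/m)C$ handles the equality case of Minkowski's theorem correctly, and $\varepsilon < 1$ is used exactly where it is needed, namely to rule out $q = 0$. One wording point: a finite intersection of slabs is not compact in general. Compactness here comes from your second description of $C$ as the linear image of a box.

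The paper gives no proof of this statement to compare against; it only cites Gr\"otschel--Lov\'asz--Schrijver. Your Minkowski argument is a standard proof of exactly this form with arbitrary real $\varepsilon$. You are also right that partitioning $[0,1)^k$ into cubes only gives $q \le \lceil \varepsilon^{-1} \rceil^k$, which is weaker unless $\varepsilon^{-1} \in \Z$. In the paper's only application, the proof of Theorem~\ref{thm:RationallyGeneratedImpliesIR}, the bound $q \le \varepsilon^{-k}$ is never used. Only $|q\alpha_i - p_i| \le \varepsilon$ matters there, so the cruder pigeonhole version would also have sufficed.
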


\section{Rationally Generated Cones are IR}
\label{sec:RationallyGeneratedCones}
In this section we prove our main theorem about rationally generated cones.
Exceptionally, we allow cones of lower dimension here as well since that allows for a simpler proof and the possibility to make conclusions about the boundary of $K$ if $K$ is not IR.
As already mentioned, our proof generalizes the one of \cite[Lemma~2.3]{CP_Factorization_First}.
\begin{thm}\label{thm:RationallyGeneratedImpliesIR}
 Let $K$ be a rationally generated closed convex cone.
 Then $K$ is IR, i.e. $\RyshkovK \subseteq \interior \COPK$.
\end{thm}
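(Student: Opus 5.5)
The plan is to show directly that for every $Q \in \boundary \COPK$ the infimum defining $\minCOP[K] Q$ is $0$. We will produce integral vectors $z \in K \cap \Z^n \setminus\{0\}$ with $Q[z]$ arbitrarily small. These vectors will be integral approximations of a scaled zero $x$ of $Q$ on $K$, obtained from Dirichlet's Theorem~\ref{thm:Dirichlet}. Since $Q \in \COPK$, every value $Q[z]$ is nonnegative, so the infimum is then exactly $0$. Rationality of $K$ enters at one point only: the face of $K$ containing $x$ spans a rational subspace.

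\textbf{Step 1: a zero of $Q$ on $K$.} First, $Q \in \boundary \COPK$ gives some $x \in K\setminus\{0\}$ with $Q[x]=0$. Otherwise $Q[\cdot]$ is bounded below by some $c>0$ on the compact set $K \cap S^{n-1}$. Then every $Q'$ with $\|Q'-Q\|$ small also satisfies $Q'[\cdot] > 0$ there, contradicting $Q \in \boundary \COPK$. (If $K$ is lower-dimensional, we work in $\Sym^n$ with this same characterization.)

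\textbf{Step 2: reduction to a rational face.} Let $F$ be the unique face of $K$ with $x \in \relint F$. Since $K$ is rationally generated, $F$ is generated by a subset of the rational generators. Hence $L := \mathrm{span}\, F$ is a rational subspace, and $L \cap \Z^n$ is a lattice of full rank in $L$ with some basis $b_1,\dots,b_k$. The key observation is that $x$ minimizes the function $Q[\cdot] \ge 0$ on $F$ and lies in its relative interior. So for any $d \in L$ and all small $|t|$ we have $0 \le Q[x+td] = 2t\, x^{\sf T} Q d + t^2 Q[d]$. Letting $t \to 0^{\pm}$ gives $x^{\sf T}Qd = 0$ for all $d \in L$. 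Consequently
\begin{equation*}
 Q[\lambda x + d] = \lambda^2 Q[x] + 2\lambda\, x^{\sf T} Q d + Q[d] = Q[d] \quad \text{for all } \lambda \in \R,\ d \in L.
\end{equation*}

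\textbf{Step 3: Dirichlet approximation.} Write $x = \sum_{i=1}^k \alpha_i b_i$ with real $\alpha_i$. Fix $0<\varepsilon<1$. Theorem~\ref{thm:Dirichlet} gives $q \ge 1$ and integers $p_i$ with $|q\alpha_i - p_i| \le \varepsilon$. Set $z := \sum_i p_i b_i \in L \cap \Z^n$ and $d := z - qx = \sum_i (p_i - q\alpha_i) b_i$, so that $\|d\| \le C\varepsilon$ with $C = \sum_i \|b_i\|$.

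We then check three things. By Step 2, $Q[z] = Q[d] \le \|Q\|\, C^2 \varepsilon^2$. Next, since $x \in \relint F$, there is $r>0$ with $(x + B_r(0)) \cap L \subseteq F$. Hence $z = q\big(x + d/q\big) \in F \subseteq K$ as soon as $C\varepsilon < r \le qr$. Finally, $z \neq 0$ because $\|qx\| \ge \|x\| > C\varepsilon \ge \|d\|$ for small $\varepsilon$. Letting $\varepsilon \to 0$ yields $\minCOP[K] Q = 0$, and hence $\RyshkovK \subseteq \interior \COPK$.

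\textbf{Main obstacle.} The delicate point is Step 2. Approximating $x$ directly in $\R^n$ would only give $Q[z] = O(q\varepsilon)$, because of the cross term $2q\,x^{\sf T}Qd$. That term is killed only by the first-order optimality condition, which holds along $L = \mathrm{span}\,F$ and generally not along all of $\R^n$. We therefore need the approximations $z$ to stay inside $L$, and this is possible only because $L$ is rational, i.e.\ $L \cap \Z^n$ is a full-rank lattice in $L$. This is exactly where an irrational boundary ray, as in Example~\ref{ex:valentin1}, breaks the argument.
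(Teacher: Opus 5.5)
Your proof is correct and is essentially the paper's argument: take an isotropic $x$ in the relative interior of its minimal face, use the first-order condition $x^{\sf T}Qy=0$ on that face's span, apply Dirichlet's theorem, and conclude from $Q[z]=Q[z-qx]=O(\varepsilon^2)$. The only difference is minor: the paper approximates the positive coefficients of $x$ with respect to integral generators $v_i\in K$, so $w=\sum_i p_i v_i\in K$ follows automatically from $p_i\ge 0$, whereas you use a lattice basis of $L\cap\Z^n$ and need the relative-interior ball argument to place $z$ in $K$ (which, incidentally, only uses that $\mathrm{span}\,F$ is rational).
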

\begin{proof}
 Suppose there is a matrix $Q \in \boundary \COPK \cap \RyshkovK$.
 Then there is a vector $x \in K \setminus \{0\}$ with $Q[x] = 0$.
 We can assume that $x \in \relint K$, since otherwise we might replace $K$ with the minimal face of $K$ containing $x$.

 Because $K$ is rationally generated, we may write
 \begin{equation*}
  x = \sum_{i = 1}^k \alpha_i v_i \text{ with } \alpha_i > 0, \, k \in \N, \text{ and } v_i \in K \cap \Z^n\setminus \{0\}.
 \end{equation*}
 For all $y \in \langle K \rangle$, that is, for all $y$ in the linear span of $K$, and sufficiently small $\varepsilon >0$, we have
 \begin{align*}
  0 \leq \frac{1}{\varepsilon} Q [ x \pm \varepsilon y] = 0 \pm 2 x^T Q y + \varepsilon Q[y],
 \end{align*}
 since $x \in \relint K$.
 It follows that $2x^T Q y \leq \pm \varepsilon Q[y]$ and thus
 \begin{equation}
  x^T Q y = 0 \text{ for all } y \in \langle K \rangle. \label{eq:xTBy_zero}
 \end{equation}

 For all $\lambda \in \R$ and all $y \in \langle K \rangle$ we also have
 \begin{equation*}
  Q [\lambda x + y] = \lambda^2 Q[x] + 2\lambda x^T Q y + Q[y] = Q[y].
 \end{equation*}

 We apply Dirichlet's Theorem~\ref{thm:Dirichlet} to the vector $\alpha = (\alpha_1,\,\ldots,\, \alpha_k)^T$ of coefficients of the conic combination $x$ with $0 < \varepsilon < 1$ and obtain $p = (p_1, \, \ldots,\, p_k)^T \in \Z^k$ and $q \in \N$.
 Since $\alpha_i > 0$, we may assume that $p_i \geq 0$ and that not all $p_i$ are zero.
 Define
 \begin{equation}
  w := \sum_{i = 1}^k p_i v_i \in K \cap \Z^n \setminus \{0\}. \label{eq:definition_w}
 \end{equation}
 Since $Q \in \RyshkovK$, we have $Q[w] \geq 1$.

 Set $y := qx - w \in \langle K \rangle$ and let $M$ be an upper bound for the values $\| v_i \|_{\infty}$ for $i = 1,\ldots, k$.
 Then
 \begin{equation*}
  \|y\|_{\infty} = \|qx - w \|_{\infty} = \left\|\sum_{i=1}^k (q \alpha_i - p_i) v_i \right\|_\infty \leq k M  \varepsilon.
 \end{equation*}

 Finally, with
 \begin{equation*}
   C := \max_{u \in \langle K \rangle \setminus \{0\}} Q \left[ \frac{u}{\|u\|_\infty}\right]
 \end{equation*}
 we have
 \begin{equation*}
  1 \leq Q [w] = Q[qx - y] = Q[y] \leq C \|y\|_\infty^2 \leq C \cdot (k M \varepsilon)^2
 \end{equation*}
 and thus a contradiction for $\varepsilon$ sufficiently small.
\end{proof}

Our theorem imposes restrictions on matrices in $\RyshkovK$ witnessing the failure of the IR property, which we record in the following corollary.
Recall that an \emph{isotropic} vector $w \in \R^n$ of a symmetric matrix $Q$ is a nonzero vector such that $Q[w] = 0$.

\begin{coro}\label{cor:IsotropicInducesExtremeRay}
 Let $K$ be a closed convex cone and assume that there exists a matrix $Q \in \boundary \COPK \cap \RyshkovK$.
 Then all isotropic vectors of $Q$ in $K$ are in the boundary of~$K$.
\end{coro}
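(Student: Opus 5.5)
We argue by contradiction, reusing the proof of Theorem~\ref{thm:RationallyGeneratedImpliesIR} almost verbatim. Suppose $Q \in \boundary \COPK \cap \RyshkovK$ and that $x \in K$ is an isotropic vector of $Q$ with $x \in \interior K$; note that $K$ is full-dimensional, so $\relint K = \interior K$. We will derive a contradiction to $Q[z] \geq 1$ for all $z \in K \cap \Z^n \setminus \{0\}$.

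The first step is the variational argument from the proof of Theorem~\ref{thm:RationallyGeneratedImpliesIR}. Since $x \in \interior K$ and $Q[x] = 0$, for every $y \in \R^n$ and small $\varepsilon > 0$ we have $x \pm \varepsilon y \in K$. Hence $0 \leq Q[x \pm \varepsilon y]$, which gives $x^T Q y = 0$ for all $y \in \R^n$, exactly as in \eqref{eq:xTBy_zero}. Consequently $Q[\lambda x + y] = Q[y]$ for all $\lambda \in \R$ and $y \in \R^n$.

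The second step replaces the rational generators, which need not exist here, by a local rational description of $K$ near $x$. Since $x$ is interior, there is $\delta > 0$ with $B_\infty(x, \delta) \subseteq K$. Choose finitely many integral vectors $v_1, \ldots, v_k$ in this box such that $x$ is a strictly positive combination $x = \sum_i \alpha_i v_i$. This is possible after scaling: rational points near $x$ in general position form a simplex containing $x$ in its interior, and we clear denominators. All the $v_i$ lie in $K$. In other words, the cone $K' = \cone\{v_1, \ldots, v_k\} \subseteq K$ is rationally generated and has $x$ in its relative interior. Alternatively, we could simply apply Dirichlet's Theorem~\ref{thm:Dirichlet} directly to the coordinates of $x$: this yields $w \in \Z^n$ and $q \in \N$ with $\|qx - w\|_\infty \leq \varepsilon$. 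Such a $w$ lies in $K$ once $qx$ is deep inside $K$, since the cone contains the ball of radius $q\delta \geq \delta$ around $qx$. Moreover $w \neq 0$ because $\|qx\|_\infty \geq \|x\|_\infty > \varepsilon$ for small $\varepsilon$.

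The final step mirrors the end of the theorem. Set $y = qx - w$, so that $\|y\|_\infty \leq \varepsilon$. With $C = \max_{\|u\|_\infty = 1} Q[u]$ we obtain
\[
1 \leq Q[w] = Q[qx - y] = Q[y] \leq C\varepsilon^2,
\]
which is a contradiction for $\varepsilon$ small. The only point needing care, and the expected main obstacle, is guaranteeing $w \in K$. This follows because $qx$ lies at $\infty$-distance at least $q\delta \geq \delta > \varepsilon$ from $\boundary K$. Therefore every isotropic vector of $Q$ in $K$ lies in $\boundary K$.
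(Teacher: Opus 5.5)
Your proof is correct and rests on the same mechanism as the paper: the variational step gives $Qx=0$, and Dirichlet approximation then produces an integral $w\in K$ with $Q[w]$ arbitrarily small. The paper packages this as a reduction: it embeds $x$ in a rationally generated subcone $\tilde{K}\subseteq K$ containing $x$ in its interior (essentially your first alternative), notes that $Q\in\boundary\COPK[\tilde{K}]\cap\RyshkovK[\tilde{K}]$, and invokes Theorem~\ref{thm:RationallyGeneratedImpliesIR}. Your executed argument instead inlines that theorem's proof with the standard basis in place of rational generators, using $B_\infty(qx,q\delta)\subseteq K$ to keep $w$ in $K$; this is self-contained and avoids constructing $\tilde{K}$, while the paper's version is shorter given the theorem.
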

\begin{proof}
 Suppose an isotropic vector $x$ is contained in the interior of $K$.
 Then we could embed $x$ in a rationally generated cone $\tilde{K}$ contained in the interior of $K$.
 Theorem \ref{thm:RationallyGeneratedImpliesIR} now contradicts that $Q \in \boundary \COPK[\tilde{K}] \cap \RyshkovK[\tilde{K}]$.
\end{proof}
\begin{remark}
 The rationality is crucial in the construction of the approximation $w$ to the isotropic vector in Equation~\eqref{eq:definition_w}.
\end{remark}

\section{Squareroot Two Example}\label{sec:Sqrt2Example}
In this section we let
 \begin{equation}
  K = \cone \left\{ \begin{pmatrix} \sqrt{2} \\ 1 \end{pmatrix}, \, \begin{pmatrix} 0 \\ 1 \end{pmatrix} \right\} = \{x \in \R^2 : x_1 \geq 0,\, -x_1 + \sqrt{2} x_2 \geq 0 \}\subseteq \R^2. \label{eq:Cone_K_Def}
 \end{equation}
 \begin{figure}
  \centering
  \includegraphics[width=0.4\textwidth]{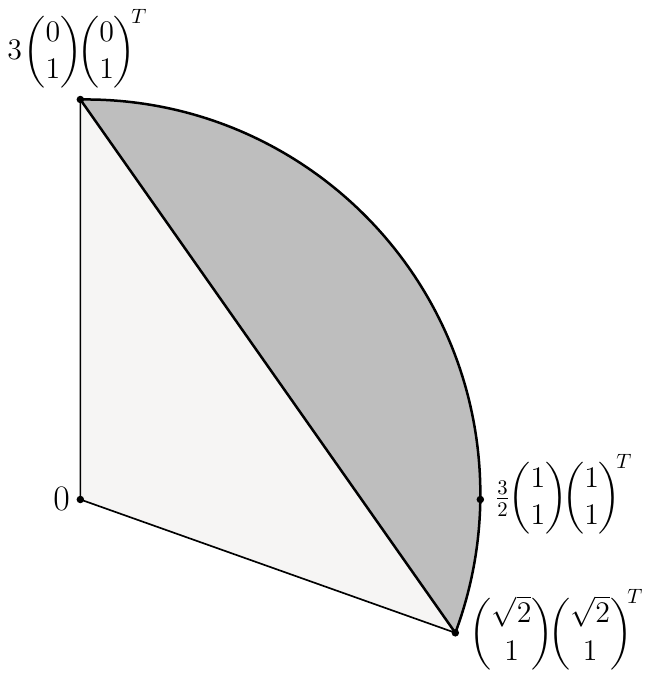}
  \caption{Sketch of $\CPK$ with the dark gray part being the matrices of $\CPK$ with trace $3$.
  }
  \label{figure:cpk}
 \end{figure}
 See Figure~\ref{figure:cpk} for a sketch of the resulting $\CPK$ cone.
 The overarching goal of this section is a throughout discussion of the following theorem.
 \begin{thm}\label{thm:sqrt2_not_IR}
  The cone $K$ defined in \eqref{eq:Cone_K_Def} is not IR.
  Moreover, its Ryshkov set $\RyshkovK$ is not a locally finite polyhedron.
 \end{thm}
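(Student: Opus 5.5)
The non-IR part comes straight from Example~\ref{ex:valentin1}. For $Q=\mathrm{diag}(-1,2)$ we have $Q\in\boundary\COPK$, and $\minCOP[K] Q=1$, so $Q \in \boundary \COPK \cap \RyshkovK$. By Definition~\ref{def:IR} (equivalently, since $\RyshkovK\not\subseteq\interior\COPK$), $K$ is not IR. Before relying on this I will recheck the two facts. First, $K$-copositivity: writing $x=\alpha\vect{\sqrt2}{1}+\beta\vect{0}{1}$ gives $Q[x]=2\beta^2+2\sqrt2\alpha\beta\ge0$. Second, for integral $(p,q)\neq 0$ the integer $2q^2-p^2$ is nonzero, so on $K\cap\Z^2$, where $2q^2-p^2\ge 0$, it is at least $1$.

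For the failure of local finiteness, I will find a polytope $P$ such that $P\cap\RyshkovK$ has infinitely many facets, or equivalently infinitely many of the constraints $Q'[z]\ge1$ are irredundant near $Q$. Work in a neighbourhood of $Q$, say inside the segment or triangle of matrices $Q_t=Q+tB$ for a suitable direction $B$. Take the Pell solutions $(p_k,q_k)$ with $p_k^2-2q_k^2=-1$, namely $(1,1),(7,5),(41,29),\dots$. These lie in $K$, since $p_k<\sqrt2 q_k$, and attain $Q[z_k]=1$, so they all lie in $\MinCOP[K] Q$. Hence $Q$ lies on infinitely many of the hyperplanes $H_k=\{Q' : Q'[z_k]=1\}$.

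The key step is to show that infinitely many of these hyperplanes actually support $\RyshkovK$ in distinct faces meeting every neighbourhood of $Q$. Concretely, I will exhibit matrices $Q^{(k)}\in\RyshkovK$ with $Q^{(k)}\to Q$ such that the only minimal vector of $Q^{(k)}$, up to finitely many, is $z_k$, and such that $\MinCOP[K] Q^{(k)}=\{z_k\}$ or a small set. My first attempt is $Q^{(k)}=Q+\varepsilon_k A_k$, where $A_k$ is a matrix vanishing on $z_k$ and positive on the other $z_j$ and on non-Pell integral points. A natural choice is $A_k[x]=(x_1q_k-x_2p_k)^2$ scaled by a small $\varepsilon_k$. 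Then $Q^{(k)}[z]=(2q^2-p^2)+\varepsilon_k(\det(z,z_k))^2$. For $z\neq\pm z_k$ primitive in $K$, the determinant is nonzero, so $Q^{(k)}[z]>1$ whenever $2q^2-p^2\ge1$. Thus $Q^{(k)}\in\RyshkovK$ and $z_k$ is its unique minimal vector up to sign; non-primitive vectors are handled by scaling.

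It follows that each $H_k$ contains a boundary point $Q^{(k)}$ of $\RyshkovK$ that does not lie on any other $H_j$, which forces infinitely many distinct faces. With $\varepsilon_k\to0$ we get $Q^{(k)}\to Q$, so any polytope containing a neighbourhood of $Q$ meets infinitely many faces, and $P\cap\RyshkovK$ is not a polytope. The main obstacle is making this last deduction rigorous: the points $Q^{(k)}$ must lie in relative interiors of pairwise distinct facets. It would be cleaner to argue that each $H_k\cap\RyshkovK$ has dimension $2=\dim\Sym^2-1$, perturbing $Q^{(k)}$ within $H_k$ using the strict inequalities, which are uniform since $Q^{(k)}\in\interior\COPK$ by Lemma~\ref{lemma:strictly_cop_implies_pos_min}. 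A polytope has only finitely many facets, which gives the contradiction.
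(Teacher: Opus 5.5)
Your argument is correct, and for the second claim it is more direct than the paper's.

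\textbf{How the paper argues.} The paper obtains the failure of local finiteness as a by-product of its detailed analysis of the face $\RyshkovK\cap F$. It parametrizes $F$, uses that $\sqrt{2}$ is badly approximable, and locates the vertices $Q_1$, $Q_2$ and the adjacent edges and rays. It then points to the phenomena found there: a perfect matrix with infinitely many Pell minimal vectors, and a non-perfect vertex. It does not spell out why these contradict local finiteness.

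\textbf{What your route adds.} You isolate the first phenomenon and supply exactly what is left implicit. Infinitely many tight constraints at $Q_1$ would not suffice by themselves, since a point of a locally finite polyhedron can lie on infinitely many redundant hyperplanes of a defining family. Your rank-one perturbations $Q_1+\varepsilon_k u_ku_k^{\sf T}$ with $u_k=(q_k,-p_k)^{\sf T}$ show that every $H_k\cap\RyshkovK$ is a genuine facet, and that these facets accumulate at $Q_1$. This uses only the irrationality of $\sqrt{2}$ and the infinitude of Pell solutions, with no Diophantine bounds. The paper's route, in exchange, gives the complete picture of the $2$-face.

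\textbf{Points to tidy.} There are three.
\begin{itemize}
\item In Example~\ref{ex:valentin1} the expansion is $Q[x]=4\alpha\beta+2\beta^2$. Your coefficient $2\sqrt{2}$ is off, though harmlessly.
\item $Q^{(k)}\to Q_1$ needs $\varepsilon_k\|z_k\|^2\to 0$, not merely $\varepsilon_k\to 0$, because the entries of $u_ku_k^{\sf T}$ grow like $q_k^2$.
\item You must actually check $Q^{(k)}\in\interior\COPK$. It holds because $Q_1$ vanishes on $K\setminus\{0\}$ only along $w=\vect{\sqrt{2}}{1}$, while $Q^{(k)}[w]=\varepsilon_k(\sqrt{2}\,q_k-p_k)^2>0$. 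After that, the uniform slack near $Q^{(k)}$ comes from the norm-scaling argument in the proof of Lemma~\ref{lemma:extreme_points_are_vertices}, not from Lemma~\ref{lemma:strictly_cop_implies_pos_min} as stated.
\end{itemize}

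\textbf{A shorter finish.} The normal cone of $\RyshkovK$ at $Q_1$ lies in $\CPK\subseteq\Sym^2_{\succcurlyeq 0}$, because $Q_1+\COPK\subseteq\RyshkovK$. It contains the pairwise non-proportional rank-one matrices $z_kz_k^{\sf T}$. Each of these is extreme in the positive semidefinite cone, hence extreme in the normal cone. So this normal cone is not polyhedral, which is impossible at a point of a locally finite polyhedron.
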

 We have seen the validity of the first claim already in Example~\ref{ex:valentin1}.
 In extension,
 we show in this section that we have a $2$-face of $\RyshkovK$ contained in a $2$-face of $\COPK$, so in $\boundary \COPK$.
 Furthermore, we obtain a perfect $K$-copositive matrix with infinitely many minimal vectors corresponding to solutions of the classical (negative) Pell Equation, as well as a non-perfect vertex of $\RyshkovK$.
 This highlights key differences to the copositive and classical cases and also proves the second claim of Theorem~\ref{thm:sqrt2_not_IR}.

 The structure of $K$ tempts one to think that irrational generators lead to non-IR cones.
 However, in Example~\ref{ex:e_Example} we show that irrational generators are not sufficient to rule out the IR property.

 We also note that our example easily generalizes to higher dimensions by a lifting argument:
 Suppose the non-IR cone $K$ is generated by some generators $b_i \in \R^n$ for $i$ in some index set $I$.
 If we lift the generators $b_i$ to $\bar{b}_i = \vect{b_i}{0} \in \R^{n+1}$, then the cone
 \begin{equation*}
  \bar{K} = \cone \left(\{ \bar{b}_i: i \in I \} \cup \{e_{n+1}\}\right).
 \end{equation*}
  is also not IR:
  If $Q \in \boundary \COPK$ with $\minCOP[K] Q = 1$, then
  \begin{equation*}
  \begin{pmatrix}
   Q & 0 \\
   0^T & 1
   \end{pmatrix} \in \Sym^{n+1}
  \end{equation*}
  witnesses the failure of the IR property of $\bar{K}$.
 That shows the existence of ``difficult'' (non-IR) cones in every dimension $n \geq 2$.

 \subsection{Characterizing a 2-Face of the K-Copositive Cone}\label{Sec:2FaceOfKCopCone}
  We start by parameterizing the $2$-face of $\COPK$ on which we can expect matrices with positive $K$-copositive minimum.
  Write
  \begin{equation*}
   Q = \begin{pmatrix}
        a & b \\
        b & c
       \end{pmatrix} \in \Sym^2
  \end{equation*}
  with $a,\,b,\,c \in \R$.
  Corollary \ref{cor:IsotropicInducesExtremeRay} implies that every matrix $Q$ in the boundary of $\COPK$ with nonzero $K$-copositive minimum satisfies $Q\big[\vect{ \sqrt{2}}{1} \big] = 0$.
  This yields
  \begin{equation}
   c = -2a - 2 \sqrt{2} b. \label{eq:c_2}
  \end{equation}

  If $Q \in \COPK$, then Equation~\eqref{eq:c_2} implies $Q \in \boundary \COPK$.
  We have $Q \in \COPK$ if and only if
  \begin{equation*}
   Q \left[ \alpha \begin{pmatrix} \sqrt{2} \\ 1 \end{pmatrix} + \beta \begin{pmatrix} 0 \\ 1 \end{pmatrix} \right] = \alpha^2 (2a + 2 \sqrt{2} b + c) + 2 \alpha \beta (\sqrt{2} b + c) + \beta^2 c \geq 0
  \end{equation*}
  for all $\alpha, \, \beta \geq 0$, so if and only if
  \begin{equation*}
  \tilde{Q} := \begin{pmatrix}
                2a + 2\sqrt{2} b + c & \sqrt{2} b + c \\
                \sqrt{2} b + c & c
               \end{pmatrix}
               \in \COP^2 = \NonN^2 \cup \Sym_{\succcurlyeq 0}^2,
  \end{equation*}
  where $\NonN^n$ denotes the symmetric $n\times n$ matrices with nonnegative entries (see \cite{CopositiveAndCompletelyPositiveMatrices}).
  Substituting $c$ from Equation~\eqref{eq:c_2}, we see that $\tilde{Q} \in \Sym_{\succcurlyeq 0}^2$ implies $\tilde{Q} \in \NonN^2$.
  Thus, we have
  \begin{equation*}
   F := \left\{ \begin{pmatrix}
                 a & b \\
                 b & -2a - 2\sqrt{2} b
                \end{pmatrix} \in \Sym^2
                :
                -2a - 2\sqrt{2} b \geq 0, \, -2a - \sqrt{2} b \geq 0
                \right\}
  \end{equation*}
  as the relevant $2$-face of $\COPK$.

  \subsection{The Relative Boundary of the 2-Face}\label{sec:RelativeBoundaryOfF}
  Matrices in the relative boundary of $F$ have zero as their $K$-copositive minimum.

  If we set $-2a - 2 \sqrt{2} b = 0$, we obtain the ray generated by
  \begin{equation*}
   \begin{pmatrix}
    -1 & \frac{\sqrt{2}}{2} \\
    \frac{\sqrt{2}}{2} & 0
   \end{pmatrix}.
  \end{equation*}
  On this ray the matrices evaluate to zero on $\vect{0}{1}$.
  Hence, the $K$-copositive minimum is then zero as well.

  If we set $-2a - \sqrt{2} b = 0$, we obtain the ray generated by
  \begin{equation*}
   Q = \begin{pmatrix}
        1 & -\sqrt{2} \\
        -\sqrt{2} & 2
       \end{pmatrix} = \begin{pmatrix} 1 \\ - \sqrt{2} \end{pmatrix} \begin{pmatrix} 1 \\ - \sqrt{2} \end{pmatrix}^T.
  \end{equation*}

  We show $\minCOP[K] Q = 0$ next.
  For every second convergent $\frac{p}{q}$ of $\sqrt{2}$ we have $\vect{p}{q} \in K \cap \Z^2\setminus\{0\}$ and by Equation~\eqref{eq:convergents_good_approx}
  \begin{equation*}
   Q \left[ \begin{pmatrix}
             \frac{p}{q} \\ 1
            \end{pmatrix}
             \right] = \left(\sqrt{2} - \frac{p}{q} \right)^2 \leq \left( \frac{1}{q^2} \right)^2.
  \end{equation*}
  Then
  \begin{equation*}
   Q \left[ \begin{pmatrix} p \\ q \end{pmatrix} \right] \leq \frac{1}{q^2}.
  \end{equation*}
  Since $q \to \infty$ for $i \to \infty$, $\minCOP[K] Q = 0$ follows.
  Note that this infimum is not attained.

  \subsection{The Relative Interior of \texorpdfstring{$F$}{F}}\label{sec:InteriorF}
  We consider the relative interior of $F$ now, so we assume the strict inequalities
  \begin{align}
   -2a - 2\sqrt{2} b &> 0 \label{ineq:c_greater_zero} \text{ and } \\
   -2a - \sqrt{2} b &> 0 \label{ineq:second_F_ineq}.
  \end{align}
  We show that the corresponding matrices $Q \in F$ have $K$-copositive minimum strictly greater than zero using the fact that $\sqrt{2}$ is badly approximable.
  Take a constant $C > 0$ such that
  \begin{equation*}
   \left| \sqrt{2} - \frac{p}{q} \right| > \frac{C}{q^2}
  \end{equation*}
  for all $p,q \in \Z$ with $q > 0$.
  We distinguish between the signs of $a$ and $b$.
  Since the main argument is conceptually identical in all cases we consider only the case where $a < 0$ and $b > 0$, for which we have some extra work to do.

  Define
  \begin{equation*}
   f(t) := Q \left[ \begin{pmatrix} t \\ 1 \end{pmatrix} \right] = a \left( t + \frac{b}{a} \right)^2  - \frac{b^2}{a} -2a -2 \sqrt{2} b
   \end{equation*}
   for $t \in [0, \sqrt{2}]$.
   Inequality~\eqref{ineq:c_greater_zero} implies $f(0) > 0$ and also $\left| \frac{b}{a} \right| < \frac{\sqrt{2}}{2}$.
   The graph of $f$ is a downwards opening parabola with its apex having $x$-coordinate $\left| \frac{b}{a} \right|$, see Figure~\ref{fig:graph_f}.
   Note that $f(\sqrt{2}) = 0$.

   \begin{figure}
    \centering
    \includegraphics[width=0.7\textwidth]{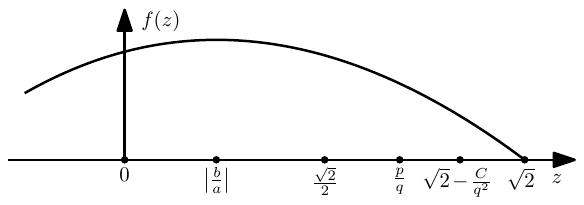}
    \caption{Graph of $f$.}
    \label{fig:graph_f}
   \end{figure}

   Let $\vect{p}{q} \in K \cap \Z^2\setminus\{0\}$.
   Consider first the case $0 \leq \frac{p}{q} < \frac{\sqrt{2}}{2}$.
   Since $f$ is continuous and strictly greater than zero on $[0, \frac{\sqrt{2}}{2}]$, there is a constant $M > 0$ with $f(t) \geq M$ for all $t \in [0, \frac{\sqrt{2}}{2}]$.
   Thus
   \begin{equation*}
    Q \left[ \begin{pmatrix} p \\ q \end{pmatrix} \right] \geq M q^2.
   \end{equation*}

   Now consider $\frac{\sqrt{2}}{2} < \frac{p}{q} < \sqrt{2}$.
   We have
   \begin{equation*}
    \left| \frac{b}{a} \right| < \frac{\sqrt{2}}{2} < \frac{p}{q} < \sqrt{2} - \frac{C}{q^2} < \sqrt{2}
   \end{equation*}
   and thus
   \begin{align*}
    Q\left[ \begin{pmatrix} \frac{p}{q} \\ 1 \end{pmatrix} \right] = f\left( \frac{p}{q} \right) &> f\left( \sqrt{2} - \frac{C}{q^2} \right) \\
    &= \left(-2\sqrt{2} a - 2 b \right) \frac{C}{q^2} + a \left( \frac{C}{q^2} \right)^2.
   \end{align*}
   This implies
   \begin{equation*}
    Q \left[ \begin{pmatrix} p \\ q \end{pmatrix} \right] > \left(-2 \sqrt{2} a - 2 b \right) C + a \frac{C^2}{q^2}.
   \end{equation*}
   Note that the term in parentheses is positive by Inequality~\eqref{ineq:second_F_ineq}.
   As a consequence, for sufficiently large $q$, say $q > N$ for a number $N \in \N$, the value $Q\left[ \vect{p}{q} \right]$ is bounded away from zero.

   Finally, for $q \leq N$ we find $0 \leq p \leq \sqrt{2} q \leq \sqrt{2} N$.
   There are in particular only finitely many such integer points $\vect{p}{q}$.
   For each of them we have $f \big(\frac{p}{q}\big) > 0$, since $f > 0$ on $[0, \sqrt{2})$.
   So $Q\left[ \vect{p}{q} \right]$ is bounded away from zero in this case as well.
  \subsection{Vertices of the Ryshkov Polyhedral Set}\label{sec:VerticesRyshkovSet}
  We describe the geometry of $\RyshkovK \cap F$ next, in particular its vertices. See Figure~\ref{fig:ryshkov_vertices} for a sketch.
  Note that these vertices are also vertices of $\RyshkovK$, as we validate in Section~\ref{sec:ClassicalVoronoiConesAreNotSuff}.
  The first vertex is
  \begin{equation*}
   Q_1 = \begin{pmatrix}
        -1 & 0 \\
        0 & 2
       \end{pmatrix}.
  \end{equation*}
  It is the same matrix as in Example \ref{ex:valentin1}.
  The minimal vectors of this matrix correspond to the positive solutions of the negative Pell Equation $p^2 - 2q^2 = -1$.
  For more details about this and more general equations we refer to \cite[Sec.~57]{NagellNumberTheory} and \cite[Sec.~4.3]{TopologyOfNumbers}.
  There are in particular infinitely many different minimal vectors, the first three being
  \begin{equation*}
   \begin{pmatrix}
   1 \\
   1
   \end{pmatrix}, \,
   \begin{pmatrix}
    7 \\
    5
   \end{pmatrix},\,
   \begin{pmatrix}
    41 \\
    29
   \end{pmatrix}.
  \end{equation*}
  Note that an infinite number of minimal vectors for a perfect matrix is a new phenomenon that is not possible in the classical and in the copositive theory.
  It is also up to scalar multiples the only matrix in $\COPK$ with this property.
  Also note that our definition of perfectness differs from the one by Opgenorth \cite{Opgenorth01012001} in that we also allow matrices in the boundary of $\COPK$ to be perfect as long as their Voronoi cones are full-dimensional.

  There are two $1$-faces of $\RyshkovK \cap F$ containing $Q_1$.
  Finding them requires more precise bounds for the approximability of $\sqrt{2}$.
  One can prove the bound
  \begin{equation*}
   \left| \sqrt{2} - \frac{p}{q} \right| \geq \frac{\sqrt{2}}{4q^2}
  \end{equation*}
  for every second convergent $\frac{p}{q}$ of $\sqrt{2}$ (cf. \cite[Sec.~1,~Thm.~5F]{Schmidt_Diophantine_Approximation}).
  Using this bound one can show that there is only one $1$-face $F'$ of $\RyshkovK \cap F$, defined by $Q\big[ \vect{p}{q} \big] = 1$ for $\vect{p}{q} \in K \cap \Z^2 \setminus \{0\}$ for all $Q \in F'$. It is the one with $\vect{p}{q} = \vect{1}{1}$.
  It follows that the missing $1$-face is the ray
  \begin{equation*}
   \left\{ Q_1 + \lambda \begin{pmatrix}
                          1 & -\sqrt{2} \\
                          -\sqrt{2} & 2
                         \end{pmatrix} : \lambda \geq 0 \right\}
  \end{equation*}
  parallel to the corresponding edge of $F$.
  There is no further vertex on this ray.
  Indeed, since the generator of the ray is $K$-copositive,
  \begin{equation}
   \left( Q_1 + \lambda \begin{pmatrix} 1 & -\sqrt{2} \\
                          -\sqrt{2} & 2
                         \end{pmatrix} \right) \left[ \begin{pmatrix} p \\ q \end{pmatrix} \right] \geq Q_1 \left[ \begin{pmatrix} p \\ q \end{pmatrix} \right]  \geq 1 \label{eq:no_further_vertex}
  \end{equation}
  for $\vect{p}{q} \in K \cap \Z^2$ and $\lambda \geq 0$.
  The existence of such a ray that originates from a limiting process and not from attained minimal vectors is also a new phenomenon.
  Also note that the symmetric matrices on the ray (besides $Q_1$) have no minimal vectors, which is also not possible in the classical and copositive theory.

  The second $1$-face is
  \begin{equation*}
   \left\{ Q_1 + \lambda \begin{pmatrix} 2 - 2\sqrt{2} &  1 \\ 1 & 2 \sqrt{2} - 4 \end{pmatrix} : \lambda \in \left[0, \frac{\sqrt{2}}{4} + \frac{1}{2} \right] \right\}.
  \end{equation*}
  For matrices in this edge we have $Q\big[\vect{1}{1}\big] = 1$.

  Its second vertex is
  \begin{equation*}
   Q_2 = \begin{pmatrix}
        -1 - \frac{\sqrt{2}}{2} & \frac{\sqrt{2}}{4} + \frac{1}{2} \\
        \frac{\sqrt{2}}{4} + \frac{1}{2} & 1
       \end{pmatrix}
  \end{equation*}
  Its minimal vectors are $\vect{1}{1}$ and $\vect{0}{1}$.
  Although it is a vertex of $\RyshkovK$, it is not perfect $K$-copositive.
  This is also a new phenomenon.

  Finally, we have the ray
  \begin{equation*}
  \left\{ Q_2 + \lambda \begin{pmatrix}
                          -1 & \frac{\sqrt{2}}{2} \\
                          \frac{\sqrt{2}}{2} & 0
                        \end{pmatrix} : \lambda \geq 0 \right\}
  \end{equation*}
  parallel to the corresponding edge of $F$.
  Matrices in this edge satisfy $Q\big[\vect{0}{1}\big] = 1$ and that is their only minimal vector (with the exception of $Q_2$ having a second minimal vector).
  By the same argument as in Equation~\eqref{eq:no_further_vertex} we find no further vertices on this ray.
  \begin{figure}
   \centering
   \includegraphics[width=0.8\textwidth]{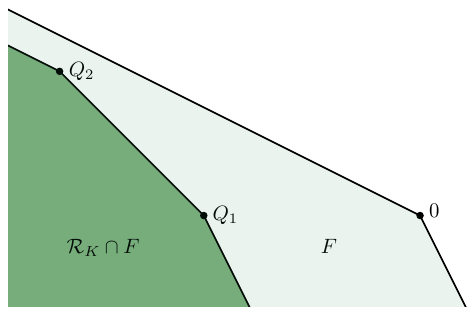}
   \caption{Sketch of $\RyshkovK \cap F$.}
   \label{fig:ryshkov_vertices}
  \end{figure}

\subsection{Voronoi Cones of the Vertices} \label{sec:ClassicalVoronoiConesAreNotSuff}
We now show that the so far used notion of Voronoi cones is not sufficient to tesselate the interior of $\CPK$ (as we can in the classical theories).
Take
\begin{equation*}
 A = \begin{pmatrix} \sqrt{2} \\ 1 \end{pmatrix} \begin{pmatrix} \sqrt{2} \\ 1 \end{pmatrix}^T
 + \begin{pmatrix} 1 \\ 1 \end{pmatrix} \begin{pmatrix} 1 \\ 1 \end{pmatrix}^T
 = \begin{pmatrix} 3 & 1 + \sqrt{2} \\ 1 + \sqrt{2} & 2 \end{pmatrix} \in \interior \CPK.
\end{equation*}
The minimum of $Q \mapsto \langle A, Q \rangle$ over $\RyshkovK$ is attained at the vertices $Q_1$ and $Q_2$ of Section~\ref{sec:VerticesRyshkovSet}.
The Voronoi cone of $Q_2$ is
\begin{equation*}
 \cone \left\{ \begin{pmatrix} 1 \\ 1 \end{pmatrix} \begin{pmatrix} 1 \\ 1 \end{pmatrix}^T,\, \begin{pmatrix} 0 \\ 1 \end{pmatrix} \begin{pmatrix} 0 \\ 1 \end{pmatrix}^T \right\} = \left\{ \begin{pmatrix} a & a \\ a & b \end{pmatrix} : b \geq a \geq 0 \right\}
\end{equation*}
and does not contain $A$.
Similarly, one can show that the infinitely generated Voronoi cone of $Q_1$ does not contain $A$ either.
(It is contained in the boundary of the cone, which is not closed since $\vect{\sqrt{2}}{1} \vect{\sqrt{2}}{1}^T$ is a limit direction, but $\vect{\sqrt{2}}{1}$ is not a minimal vector.)
This shows another difference to the classical case.
In particular, there are matrices in the interior of $\CPK$ not contained in any Voronoi cone.

Note that adding the for $Q_1$ and $Q_2$ common isotropic vector $\vect{\sqrt{2}}{1}$ to the list of minimal vectors and taking the conic hull over the ensuing rank $1$ matrices gives the normal cone of $\RyshkovK$ at $Q_1$, respectively, $Q_2$ again.
In both of these cones we can find $A$, so we can manage to find tessellations by extending the Voronoi cones (see Section~\ref{sec:FailureOfIR}).
Both of these normal cones are also full-dimensional, showing that $Q_1$ and $Q_2$ are indeed vertices of $\RyshkovK$.

\subsection{A Contrary Example}\label{sec:e_example}
To complement the so far given example we consider a at first glance similar example displaying a different behavior.
This shows that irrational generators are not sufficient to destroy the Interior Ryshkov property.

Euler proved the following simple continued fractions expansion, cf. e.g. \cite[Sec.~1.3.2]{FinchMathematicalConstants}.
\begin{lemma}
 For Euler's number $e$ we have
 \begin{equation*}
  e = [2;\, a_1,\, a_2,\, \ldots]
 \end{equation*}
 with
 \begin{equation*}
  a_i = \begin{cases}
         1 & i \equiv 0,\, 1 \mod 3, \\
         2 \cdot \frac{i + 1}{3} & i \equiv 2\phantom{,\, 1} \mod 3
        \end{cases}
 \end{equation*}
 for $i\in\N$.
\end{lemma}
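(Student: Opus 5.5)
This is Euler's classical continued fraction expansion of $e$, and I would prove it through the standard route via the generalized convergents of Hermite-type integrals; it is not something to derive from the earlier results of the paper. First, note the indexing: the claimed expansion reads $e=[2;1,2,1,1,4,1,1,6,\ldots]$, which is the usual statement $e=[1;0,1,1,2,1,1,4,1,\ldots]$ after contracting the leading $[1;0,1]=2$. So it suffices to prove that the triples $(1,2k,1)$ appear in order.

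The plan is to define, for $n\ge 0$, the integrals
\begin{equation*}
A_n=\int_0^1 \frac{x^n(x-1)^n}{n!}e^x\,dx,\quad B_n=\int_0^1 \frac{x^{n+1}(x-1)^n}{n!}e^x\,dx,\quad C_n=\int_0^1 \frac{x^n(x-1)^{n+1}}{n!}e^x\,dx .
\end{equation*}
Then verify three recurrences by direct manipulation:
\begin{itemize}
\item $A_n=-B_{n-1}-C_{n-1}$,
\item $B_n=-2nA_n+C_{n-1}$, obtained by differentiating $x^n(x-1)^n e^x/n!$ and integrating over $[0,1]$,
\item $C_n=B_n-A_n$, which is trivial since $x-1=x-(1)$.
\end{itemize}
The initial values $A_0=e-1$, $B_0=1$, $C_0=2-e$ are computed directly.

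Next, compare with the convergents $p_m/q_m$ of $[1;0,1,1,2,1,1,4,\ldots]$. Their recursion $p_m=a_mp_{m-1}+p_{m-2}$ over each block $(2n,1,1)$ matches the recurrences above. By induction on $n$ one shows
\begin{equation*}
A_n=q_{3n}e-p_{3n},\quad B_n=p_{3n+1}-q_{3n+1}e,\quad C_n=p_{3n+2}-q_{3n+2}e .
\end{equation*}
The alternating signs are tracked carefully.

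Finally, the integrands are bounded by $e/n!$ on $[0,1]$, so $A_n,B_n,C_n\to 0$. Hence $p_m/q_m\to e$, since the $q_m\ge 1$ are increasing. A number to which the convergents of an infinite simple continued fraction converge equals that continued fraction, which gives the lemma.

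The main obstacle is getting the three integral recurrences exactly right with signs. In particular the identity for $B_n$ requires expanding $\frac{d}{dx}[x^n(x-1)^n]$ and using that the boundary terms vanish. Matching the indices to the shifted convention (starting at $[2;\ldots]$ rather than $[1;0,1,\ldots]$) is routine bookkeeping afterwards.
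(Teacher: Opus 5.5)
Your argument is correct in substance. It is essentially Cohn's short proof via Hermite-type integrals. The paper gives no proof at all: it quotes Euler's expansion from the literature (Finch, Sec.~1.3.2), because the lemma is only used in the following remark to conclude that the even- and odd-indexed partial quotients of $e$ are unbounded.

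What your route buys is a self-contained, elementary proof using only integration by parts and the convergent recursion. It also gives explicit error bounds $|q_m e-p_m|\le e/n!$ for $m\in\{3n,3n+1,3n+2\}$. The citation buys brevity for a fact peripheral to the paper.

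The rest of your outline checks out:
\begin{itemize}
\item The base case matches: $A_0=e-1$, $B_0=1$, $C_0=2-e$ correspond to $(p_0,q_0)=(1,1)$, $(p_1,q_1)=(1,0)$, $(p_2,q_2)=(2,1)$.
\item The induction step goes through using $a_{3n+3}=1$, $a_{3n+4}=2(n+1)$, $a_{3n+5}=1$.
\item The contraction $[1;0,1,x]=[2;x]$ gives the paper's indexing.
\end{itemize}

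One step is misattributed, and it is the step you yourself flagged as the crux. Differentiating $x^n(x-1)^n e^x/n!$ and integrating over $[0,1]$ gives $0=A_n+B_{n-1}+C_{n-1}$. The derivative of $x^n(x-1)^n$ contributes the factor $2x-1=x+(x-1)$, which produces the integrands of $B_{n-1}$ and $C_{n-1}$. So this is your \emph{first} recurrence, which you otherwise leave unjustified; it is not the recurrence for $B_n$. Moreover, the first and third recurrences alone leave $B_n$ undetermined, so the $B_n$ identity genuinely needs its own computation.

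The correct function to differentiate is $x^n(x-1)^{n+1}e^x/n!$, the integrand of $C_n$. Its derivative is $\frac{x^{n-1}(x-1)^n}{n!}\,(x^2+2nx-n)\,e^x$. This splits exactly into the integrands of $B_n$, $2nA_n$ and $-C_{n-1}$. The boundary terms vanish for $n\ge 1$, which yields $B_n=-2nA_n+C_{n-1}$.

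A minor point: in $[1;0,1,\ldots]$ you have $q_1=0$ and $q_0=q_2=1$. So ``$q_m\ge 1$ are increasing'' holds only from $m\ge 2$ on, and only non-strictly. That suffices, since you only need $q_m\ge1$ there to pass from $|q_m e-p_m|\to 0$ to $p_m/q_m\to e$.
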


\begin{remark}
 It follows that $e$ is not badly approximable.
 Since the partial quotients of both odd and even order are unbounded, the proof of \cite[Sec.~1,~Thm.~5F]{Schmidt_Diophantine_Approximation} yields that
 \begin{equation*}
  \left| e - \frac{p}{q} \right| < \frac{\varepsilon}{q^2}
 \end{equation*}
 can be achieved by convergents smaller than $e$ for all $\varepsilon > 0$.
\end{remark}
 \begin{example}\label{ex:e_Example}
  Set
 \begin{equation*}
  K = \cone \left\{ \begin{pmatrix} e \\ 1 \end{pmatrix}, \, \begin{pmatrix} 0 \\ 1 \end{pmatrix} \right\} = \{ x \in \R^2 : x_1 \geq 0,\, -x_1 + e x_2 \geq 0 \} \subseteq \R^2.
 \end{equation*}
 We show that $K$ is IR, i.e. that all matrices in the boundary of $\COPK$ have $K$-copositive minimum zero.

  All matrices which could stand in opposition against the IR property have $\vect{e}{1}$ as an isotropic vector, which follows from Corollary~\ref{cor:IsotropicInducesExtremeRay}.

 Analogously to Section~\ref{Sec:2FaceOfKCopCone} we find that the face $F$ of $\COPK$ of such matrices is given by
 \begin{equation*}
  F = \left\{ \begin{pmatrix}
                 a & b \\
                 b & -e^2 a - 2e b
                \end{pmatrix} \in \Sym^2
                :
                -e^2a - 2 e b \geq 0, \, -e^2a - eb \geq 0 \right\}.
 \end{equation*}
 As in Section~\ref{sec:RelativeBoundaryOfF} we find that the matrices in the relative boundary of $F$ have $K$-copositive minimum zero.

 To show that this is the case for the matrices $Q$ in the interior of $F$ as well, let $\varepsilon > 0$ and assume $\varepsilon < e$.
 Let further $\frac{p}{q}$ be a convergent of $e$ smaller than $e$ with
 \begin{equation*}
  e - \frac{p}{q} < \frac{\varepsilon}{q^2}.
 \end{equation*}
 Here we prove for simplicity only the case $a < 0,\, b \leq 0$.
 We have
 \begin{equation*}
  -\frac{b}{a} \leq 0 \leq e - \frac{\varepsilon}{q^2} \leq \frac{p}{q} \leq e.
 \end{equation*}
 Setting
 \begin{equation*}
  f(t) := Q \left[ \begin{pmatrix} t \\ 1 \end{pmatrix} \right] = a \left( t + \frac{b}{a} \right)^2 - \frac{b^2}{a} - e^2 a - 2 e b,
 \end{equation*}
 the graph of $f$ is a downwards opening parabola.
 Since $f(0) = -e^2a - 2eb > 0$ and $f$ attains its maximum at $-\frac{b}{a}$, we have
 \begin{align*}
  f\left(\frac{p}{q} \right) &\leq f\left(e - \frac{\varepsilon}{q^2}\right) \\
  &= (-2ae -2b) \frac{\varepsilon}{q^2}  + a \frac{\varepsilon^2}{q^4}.
 \end{align*}
 So
 \begin{equation*}
  Q \left[ \begin{pmatrix} p \\ q \end{pmatrix} \right] \leq (-2ae - 2b) \varepsilon + a \frac{\varepsilon^2}{q^2}
 \end{equation*}
 and thus $\minCOP[K] Q = 0$.
 \end{example}

\section{Approximations To \texorpdfstring{$\CPK$}{CPK}}\label{sec:ApproxToCPK}
In this section we describe inner and outer polyhedral approximations to $\CPK$ using perfect $K$-copositive matrices, or, more generally, the extreme points of $\RyshkovK$ if $K$ is not IR.
In practice, if $K$ is IR, we can perform a graph traversal search on the vertices of $\RyshkovK$ to find suitable perfect $K$-copositive matrices:
For $K = \R^n$ this is the classical Voronoi algorithm and for $K = \R^n_{\geq 0}$ this is the certificate algorithm of \cite{CertificateAlgo}.
See that reference specifically for details on the traversal and how the certificates emerge from the approximations.

\subsection{Tesselation of the Interior of \texorpdfstring{$\CPK$}{CPK} into Voronoi Cones}
We now deal with an inner approximation to $\CPK$, which enables certifying membership in $\CPK$.
We can tesselate $\interior \CPK$ into (extended) Voronoi cones belonging to the extreme points of $\RyshkovK$.
In the case that $K$ is IR this is completely analogue to the classical cases, where the perfect $K$-copositive matrices correspond to the vertices of $\RyshkovK$.
If $K$ fails to be IR, $\RyshkovK$ may fail to be locally polyhedral and we have to consider the more general extreme points of $\RyshkovK$.

\subsubsection{Cones Satisfying the IR Property}
First we suppose that $K$ satisfies the IR property.
Recall that the Voronoi cone of a perfect $K$-copositive matrix $Q$ is defined as
\begin{equation*}
 \V (Q) = \cone \{ v v^T : v \in \MinCOP[K]Q \}.
\end{equation*}

As the proof of \cite[Thm~1.1]{CP_Factorization_First} generalizes if $K$ is IR, we find that every rational matrix $A \in \interior \CPK$ possesses a rational $\CPK$-factorization.
Algorithmically, for $A \in \interior \CPK$ one minimizes $Q \mapsto \langle A, Q \rangle$ over $\RyshkovK$, or, more specifically, over the set
\begin{equation*}
 \mathcal{P}_K(A, \lambda) = \RyshkovK \cap \{B \in \Sym^n : \langle A, B \rangle \leq \lambda \}
\end{equation*}
 for some sufficiently large value $\lambda > 0$.
In the case that $K$ is IR the set $\PK(A, \lambda)$ is actually a polytope (see \cite[Lemma~2.4]{CP_Factorization_First}, the proof generalizes again if $K$ is IR).
 The minimum is thus attained at some vertex $Q^*$ of $\RyshkovK$ and $\V(Q^*)$ corresponds to the normal cone of $\RyshkovK$ at $Q^*$.

 This implies that the algorithm in \cite{CertificateAlgo} for calculating rational $\CP$-factorizations can be adapted by generalizing the necessary copositive minimum calculations to $K$-copositive minimum calculations.

 \subsubsection{Cones without the IR Property}\label{sec:FailureOfIR}
 If K does not have the IR property, the situation seems to be severely more complicated, as seen in
 Section~\ref{sec:ClassicalVoronoiConesAreNotSuff}.
 In general, we can still tesselate $\interior \CPK$ into \emph{extended Voronoi cones}:
 For an extreme point $Q$ of the Ryshkov set $\RyshkovK$, we define
 \begin{equation*}
  \extendedVoronoi (Q) := \cone \left( \{ vv^T : v \in \MinCOP[K] Q \} \cup \{ w w^T : w \in K,\, Q[w] = 0\} \right).
 \end{equation*}
 It is the normal cone of $Q$ with respect to $\RyshkovK$.
 Let $A \in \interior \CPK$.
 For some sufficiently large $\lambda$, minimize $Q \mapsto \langle Q, A \rangle$ over $\PK(A, \lambda)$ and let $Q^*$ be an optimal solution.
 This is possible even if $K$ is not IR since the proof of \cite[Lemma~2.4]{CP_Factorization_First} yields that the set $\PK(A, \lambda)$ is still a compact convex set.
 This shows that $A$ is contained in the normal cone of $Q^*$, i.e. in some extended Voronoi cone.

 Proving that $\extendedVoronoi (Q^*)$ is the normal cone can be done roughly along the lines of \cite[Thm~1.1]{CP_Factorization_First} by some technical arguments:
 The adjunction of the isotropic elements ensures that $\extendedVoronoi(Q^*)$ is closed.
 If $A \notin \extendedVoronoi(Q^*)$, we could strictly separate $A$ and $\extendedVoronoi(Q^*)$.
 From this separation we would find a matrix $Q'$ that is better than $Q^*$ (with respect to the minimization problem) and also contained in $\RyshkovK$, contradicting the optimality of $Q^*$.
 Showing the latter fact requires one to consider the vectors $v \in K \cap \Z^n \setminus \{0\}$ ``close'' to isotropic vectors carefully for the condition $Q'[v] \geq 1$.

 This allows in principle to calculate certificates.
 However, the possible lack of finiteness of $\MinCOP[K] Q^*$ and the irrationality seem to imply some practical hurdles.
 Moreover, if $Q^*$ is isotropic on a face of $K$ of dimension at least two, then $\extendedVoronoi(Q^*)$ is also not finitely generated.
 So only relatively tame cones like those in Section~\ref{sec:Sqrt2Example} are eligible in general here.
 Nonetheless, it is still possible that even for difficult cones concrete certificates rely only classical Voronoi cones.

\subsection{An Outer Approximation to \texorpdfstring{$\CPK$}{CPK}}
In this section we give an outer approximation to $\CPK$ using the extreme points and extreme rays of $\RyshkovK$.
This is analogue to the polyhedral subdivision of $\Sym_{\succ 0}^n$ mentioned in the introduction and to the one given in \cite[Theorem~2.3]{CertificateAlgo} in the copositive case.
They allow for certification of non-membership in $\CPK$.

\begin{thm}\label{thm:OuterApprox}
 Let $K$ be a full-dimensional closed convex cone.
 Then
 \begin{equation*}
  \CPK = \left\{ A \in \Sym^n : \langle A, Q \rangle \geq 0 \text{ for all extreme points and extreme rays } Q \text{ of } \RyshkovK \right\}.
 \end{equation*}
\end{thm}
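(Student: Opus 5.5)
One inclusion is immediate, and the other reduces to a separation argument together with a structural description of $\RyshkovK$.

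\emph{The easy inclusion.} Every extreme point and every extreme ray direction $Q$ of $\RyshkovK$ lies in $\COPK$, and $\CPK = (\COPK)^*$, so $\langle A, Q\rangle \geq 0$ for all $A \in \CPK$.

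For extreme points this holds because $\RyshkovK \subseteq \COPK$. Indeed, if $Q[x]<0$ for some $x\in K$, then by density of rational points in $K$ we get $Q[z]<0$ for some $z \in K\cap\Z^n\setminus\{0\}$, contradicting $Q[z] \geq 1$.

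For extreme rays $Q_0 + \R_{\geq 0} R$, the direction $R$ lies in the recession cone of $\RyshkovK$. That recession cone equals $\{R : R[z]\geq 0 \text{ for all } z\in K\cap\Z^n\} = \COPK$, again by density and scaling. (If the theorem means that the ray's points themselves are the $Q$, these are elements of $\RyshkovK \subseteq \COPK$ anyway.)

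\emph{The reverse inclusion.} I would argue by contraposition. Let $A \notin \CPK$. Since $\CPK$ is a closed convex cone (closedness of the conic hull of $\{xx^T : x \in K\}$ follows from compactness of the generating set $\{xx^T : x\in K, \|x\|=1\}$ not containing $0$), it equals its bidual. So there is $B \in \COPK$ with $\langle A, B\rangle < 0$. I then want to transport this witness into the relevant structure of $\RyshkovK$.

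Fix any $Q_0 \in \interior \COPK$ scaled into $\RyshkovK$; this is possible by Lemma~\ref{lemma:strictly_cop_implies_pos_min}. Then $Q_0 + tB \in \RyshkovK$ for all $t\geq 0$, so the linear functional $Q\mapsto\langle A,Q\rangle$ is unbounded below on $\RyshkovK$.

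Next I need that $\RyshkovK$ is the convex hull of its extreme points plus the cone generated by its extreme rays. This is a Klee/Minkowski type representation for closed convex sets containing no lines. The lineality space of $\RyshkovK$ is $\COPK\cap(-\COPK)=\{0\}$, since $K$ is full-dimensional: $Q[x]=0$ on an open set forces $Q=0$.

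\emph{The main obstacle.} For unbounded line-free closed convex sets, the convex hull of extreme points plus the recession cone gives the set. However, the recession cone is $\COPK$, and it is generated by its own extreme rays, which need not coincide with extreme rays of $\RyshkovK$. So I would argue directly with the linear functional instead.

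Suppose $\langle A,Q\rangle\geq 0$ for all extreme points and extreme rays $Q$. Decompose $B$, as an element of the pointed closed cone $\COPK$, into a sum of extreme directions $R_j$ of $\COPK$. Some $R_j$ then satisfies $\langle A,R_j\rangle<0$.

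Now consider the face $G$ of $\RyshkovK$ on which $\langle R_j,\cdot\rangle$ is minimized. This minimum is attained, because $\langle R_j, \cdot\rangle$ is bounded below by $0$ on $\RyshkovK$ and the set is closed with recession cone $\COPK$; I expect this attainment step to need care. Minimizing over a line-free face yields an extreme point $Q_1$ of $\RyshkovK$. I then aim to show that $Q_1+\R_{\geq0}R_j$ is an extreme ray of $\RyshkovK$, which would contradict the hypothesis since $\langle A, R_j\rangle<0$. The argument is that any decomposition of a point on this ray inside $\RyshkovK$ must have components along $R_j$: extremality of $R_j$ in $\COPK$ combined with $Q_1$ being a minimizer of $\langle R_j,\cdot\rangle$ forces this. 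This last verification, ensuring the ray is actually extreme in $\RyshkovK$, is the delicate step.
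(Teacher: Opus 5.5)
Your easy inclusion is correct, and so is the setup of the converse. $\CPK$ is closed, so $A\notin\CPK$ yields $B\in\COPK$ with $\langle A,B\rangle<0$. The set $\RyshkovK$ is nonempty, closed and line-free with recession cone $\COPK$, and $\langle A,\cdot\rangle$ is unbounded below on it. The argument breaks after that, at two points.

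First, $\langle R_j,\cdot\rangle$ need not be bounded below on $\RyshkovK$, because two $K$-copositive matrices can have negative inner product. For $K=\R^2_{\geq0}$ the matrix $R=\bigl(\begin{smallmatrix}0&1\\1&0\end{smallmatrix}\bigr)$ spans an extreme ray of $\COP^2$. Yet $Q_t=I+t(e_1-e_2)(e_1-e_2)^{\sf T}\in\RyshkovK$ for all $t\geq0$, and $\langle R,Q_t\rangle=-2t$. So the face $G$ need not exist, and even boundedness below would not by itself give attainment.

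Second, and decisively, your target claim is false in general: $Q_1+\R_{\geq0}R_j$ need not be an extreme ray of $\RyshkovK$. Your sketch for it never uses the hypothesis on $A$. For $K=\R^n$ the Ryshkov polyhedron has no extreme rays at all, whereas $\Sym^n_{\succcurlyeq0}$ has the extreme rays spanned by $xx^{\sf T}$. Concretely, take $n=2$ and $R=e_1e_1^{\sf T}$. The functional $Q\mapsto Q_{11}$ attains its minimum $1$ at the perfect form $Q_1=\bigl(\begin{smallmatrix}1&1/2\\1/2&1\end{smallmatrix}\bigr)$. But for $t>0$ the matrix $Q_1+tR$ is positive definite with $\pm e_2$ as its only minimal vectors. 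It can therefore be moved a little inside the Ryshkov set in every direction $B$ with $B_{22}=0$, so the half-line is not a face. For $A=\bigl(\begin{smallmatrix}-1&0\\0&1\end{smallmatrix}\bigr)$ the violated inequality comes from an extreme point instead, e.g.\ $U^{\sf T}Q_1U$ where $U$ has columns $(1,k)^{\sf T}$ and $(0,1)^{\sf T}$ with $k\geq1$. So you cannot insist that the witness be a ray whose direction is an extreme ray of $\COPK$.

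The missing idea is the correct form of the representation theorem you set aside. Rockafellar's Theorem~18.5 says that a closed convex set $C$ containing no lines is the convex hull of its extreme points together with its extreme directions. These extreme directions are the directions of half-line faces of $C$ itself, not the extreme rays of the recession cone $0^+C$. The version $C=\conv E+0^+C$ you recalled is only a weaker consequence. Applied to $\RyshkovK$, the theorem gives $\RyshkovK=\conv E+\cone D$, with $E$ the extreme points and $D$ the directions of the extreme rays of $\RyshkovK$. Since $Q_0+tB\in\RyshkovK$ and $\langle A,Q_0+tB\rangle<0$ for large $t$, some element of $E\cup D$ must pair negatively with $A$. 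That completes your contrapositive with no further work. This is the paper's proof, phrased there as $\interior\COPK\subseteq\cone\RyshkovK\subseteq\COPK$, hence $\CPK=(\cone\RyshkovK)^*$, followed by this decomposition.
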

\begin{proof}
 We have $\interior \COPK \subseteq \cone \RyshkovK \subseteq \COPK$ and thus by dualizing $\CPK = (\cone \RyshkovK)^*$.
 By \cite[Theorem~18.5]{ConvexAnalysisRockafellar} we have
 \begin{equation*}
  \RyshkovK = \conv \{\text{extreme points of } \RyshkovK\} + \cone \{\text{extreme rays of } \RyshkovK\}
 \end{equation*}
 and through a standard calculation for $(\cone \RyshkovK)^*$ the conclusion follows.
\end{proof}

If $K$ is IR, the extreme points are the perfect $K$-copositive matrices.
They can be found iteratively by a graph traversal search of the vertices of $\RyshkovK$ as in the classical Voronoi algorithm.

In the classical case $K = \R^n$ the Ryshkov polyhedron has no extreme rays (see e.g. \cite[Thm~7.2.1]{martinet-2003}).
Here one obtains
\begin{align*}
 \Sym_{\succcurlyeq 0}^n = \{Q \in \Sym^n : \langle Q, P \rangle \geq 0 \text{ for all perfect } P\}.
\end{align*}
The copositive case $K = \R^n_{\geq 0}$ has been treated in \cite{PerfectCopositive}, where the extreme rays of $\RyshkovK$ have been obtained.
They coincide with the extreme rays of $\NonN^n$, the cone of real nonnegative symmetric $n\times n$ matrices.
This yields the characterization
\begin{equation*}
 \CPK = \left\{ A \in \NonN^n : \langle A, P \rangle \geq 0 \text{ for all perfect copositive } P \right\}.
\end{equation*}

\section{Open Questions}\label{sec:OpenQuestions}
\subsection{Practical Computations}
While it is clear that for cones satisfying the IR property we can apply the graph traversal algorithm on the vertex graph of $\RyshkovK$ to find $\CPK$-certificates in general, it is not clear how to solve the necessary copositive minimum problems, see \cite{CertificateAlgo, oertel2025computingcopositiveminimumrepresentatives} for strategies for $K = \R^n_{\geq 0}$.
They are needed to calculate the Voronoi cones and to find contiguous perfect $K$-copositive matrices.
To practically compute certificates the approaches of \cite{CertificateAlgo, oertel2025computingcopositiveminimumrepresentatives} would need to be generalized.
For finitely and rationally generated cones this seems to be feasible.

For cones not satisfying the IR property it is additionally not clear at all how to perform the graph traversal search.
In particular, that is the case if we have extreme points that are not vertices, or a non-discrete vertex set (see Section~\ref{sec:OpenGeometry}).
Completely new ideas might be needed here.

For sufficiently nice non-IR cones $K$, where we can in principle do a graph traversal search, we could still encounter vertices of $\RyshkovK$ with infinitely many minimal vectors.
Here new conceptual ideas are needed on how to describe a potentially infinite number of minimal vectors and how to decide which extreme ray of the corresponding (extended) Voronoi cone to choose to find the next vertex.
The example of Section~\ref{sec:Sqrt2Example} suggests that we could treat the isotropic vector as a ``virtual'' minimal vector, but we do not currently know how to make that precise.
It seems even more difficult to generalize this idea to more complicated cones.

\subsection{Classification of IR Cones}
In dimension 2 the proofs of the example of Section \ref{sec:Sqrt2Example} and Example \ref{ex:e_Example} show that the IR property depends essentially only on the approximability of the extreme ray generators of $K$ from within $K$.
Since IR cones are much nicer to handle algorithmically, it would be very nice to have further results on conditions related to the IR property besides Theorem \ref{thm:RationallyGeneratedImpliesIR}.

\subsection{Geometry of the Ryshkov Set}\label{sec:OpenGeometry}
In the outer approximations to $\CPK$ of Theorem \ref{thm:OuterApprox} the extreme rays of $\RyshkovK$ play a central role.
For $K = \R^n_{\geq 0}$ these rays are very simple and it seems feasible that the extreme rays of $\RyshkovK$ can be classified if $K$ is simple enough, e.g. finitely generated.

Besides that the geometry of $\RyshkovK$ is reasonably well understood if $K$ has the IR property since $\RyshkovK$ is a locally finite polyhedron then.
Otherwise we know a lot less.
A basic question is if every extreme point is also a vertex and if so, if the vertex set is discrete.
A promising approach to settle these questions is to consider the non-polyhedral ice cream cone
\begin{equation*}
 C_{n+1} = \left\{ \vect{x}{t} : x \in \R^n, t \in \R, \|x\| \leq t \right\},
\end{equation*}
also known as the Lorentz cone.
A nice characterization of $\COPK[C_{n+1}]$ of Loewy and Schneider \cite[Lemma~2.2]{LOEWY1975375} allows one to consider concrete examples.
By a preliminary examination we already know that $C_3$ is not IR and that its boundary exhibits an interesting structure.
Studying this example in more detail should lead to a lot more clarity about what is possible for non-IR cones.
\section*{Acknowledgement}
We like to thank Valentin Dannenberg for his very useful insight on our non-IR example and on generalized perfectness in general.
Moreover, we like to thank the anonymous referee for his highly beneficial and thoughtful comments.
Both authors gratefully acknowledge support by the German Research Foundation (DFG)
under grant SCHU 1503/8-1.

\printbibliography[keyword={ref}]
\end{document}